\newlength{\hchng}
\newlength{\vchng}
\newtheorem{thm}{Theorem}[section]
\newtheorem{prop}[thm]{Proposition}
\newtheorem{cor}[thm]{Corollary}
\newtheorem{lemma}[thm]{Lemma}
\newtheorem{definition}[thm]{Definition}
\newtheorem{preremark}[thm]{Remark}
\newenvironment{remark}{\begin{preremark}\rm}{\medskip \end{preremark}}
\numberwithin{equation}{section}
\newcommand{\norm}[1]{\left\Vert#1\right\Vert}
\newcommand{\abs}[1]{\left\vert#1\right\vert}
\newcommand{\R}{\mathbb R}
\newcommand{\eps}{\varepsilon}
\newcommand{\grad} {\nabla}
\newcommand{\lap} {\triangle}
\newcommand{\dd} {\; \mathrm{d}}
\DeclareMathOperator*{\osc}{osc}
\DeclareMathOperator{\supp}{supp}
\DeclareMathOperator{\dv}{div}
\newcommand{\I} {\mathrm{I}}
\newcommand{\Q}{Q}
\title{On the differentiability of the solution to the Hamilton-Jacobi equation with critical fractional diffusion}
\author{Luis Silvestre}
\begin{document}
\maketitle

\begin{abstract}
We prove that the Hamilton Jacobi equation for an arbitrary Hamiltonian $H$ (locally Lipschitz but not necessarily convex) and fractional diffusion of order one (critical) has classical $C^{1,\alpha}$ solutions. The proof is achieved using a new H\"older estimate for solutions of advection diffusion equations of order one with bounded vector fields that are not necessarily divergence free.
\end{abstract}

\section{Introduction}

We consider the Hamilton-Jacobi equation with fractional diffusion
\begin{equation} \label{e:hj-fract}
u_t + H(\grad u) + (-\lap)^s u = 0.
\end{equation}
Where $H$ is a locally Lipschitz function. If the given initial data $u(x,0) = u_0(x)$ is smooth and $s > 1/2$, the smoothing effect of the fractional Laplacian term is stronger than the effect of the nonlinear term in small scales, and it is well known that the solution $u$ will remain smooth for positive time (see \cite{droniou2006fractal} and \cite{imbert2005non}), i.e. the problem is well posed in the classical sense.

The case $s =1/2$ is the most delicate because the two terms in the equation are of order one and their contributions are balanced at every scale. In this article we will show that the solution is also $C^{1,\alpha}$ smooth in this critical case as well.

For $s<1/2$ we can show that the equation develops singularities (discontinuities of the derivative) as a corollary of a construction of Kiselev, Nazarov and Shterenberg in \cite{kiselev-blow} for the fractional Burgers equation.

In \cite{droniou2006fractal}, it was shown that equation \eqref{e:hj-fract} has a unique viscosity solution in $W^{1,\infty}$ (i.e. Lipschitz) for any value of $s \in (0,1)$ if the initial data $u(-,0)$ is Lipschitz. The equation was also studied in \cite{imbert2005non} and \cite{karch2008fractal}. In \cite{droniou2003global}, the following one dimensional equation is studied:
\[ v_t + \partial_x f(v) + (-\lap)^s v = 0. \]
This equation corresponds to the one dimensional case of \eqref{e:hj-fract} with $f=H$ and $v = u_x$. They prove that the equation has a smooth solution if $s > 1/2$, but they explicitly leave the case $s = 1/2$ open, even in this one dimensional scenario.

The equation \eqref{e:hj-fract} arises in problems of optimal control of processes with $\alpha$-stable noise. It is a particular case of the more general first order Isaacs equation
\begin{equation} \label{e:hj-general}
u_t - \I u := u_t - \sup_i \inf_j \left( c^{ij} + b^{ij} \cdot \grad u + \int_{\R^n} \frac{u(x+y)-u(x)}{|y|^{n+1}} \ a^{ij} (y) \dd y \right) = 0.
\end{equation}
where $i$ and $j$ are two indexes ranging in arbitrary sets (controls), $c^{ij}$ is a family of constant scalars, $b^{ij}$ is a bounded family of vectors and the kernels $a^{ij} (y)$ satisfy
\begin{align*}
a^{ij} (y) &= a^{ij} (-y) \qquad \text{(symmetry)} \\
\lambda \leq a^{ij} (y) &\leq \Lambda \qquad \text{(uniform ellipticity)} \\
\end{align*}

The integral in \eqref{e:hj-general} makes sense in the principal value sense (thanks to the symmetry assumption on the kernels $a^{ij}$) as long as $u \in C^{1,\alpha}$ for some $\alpha>0$. The main result in this paper is that the solution $u$ is $C^{1,\alpha}$ in both time and space, which means that the equations \eqref{e:hj-fract} and \eqref{e:hj-general} are well posed in the classical sense. 

Note that for any Lipschitz function $H$, the equation \eqref{e:hj-fract} can be recovered from \eqref{e:hj-general} using a fixed constant $a$ and writing $H$ as 
\[ H(p) = \sup_i \inf_j (c^{ij} + b^{ij} \cdot p). \]
Such representation of $H$ always exists for some bounded family $b^{ij}$ and $c^{ij}$ if $H$ is Lipschitz. If $H$ is only \emph{locally} Lipschitz, equation \eqref{e:hj-fract} takes the form \eqref{e:hj-general} only a posteriori once we know that the solution $u$ is Lipschitz, so that $\grad u$ stays in a bounded domain where $H$ can be considered a Lipschitz function.

In this paper we study problem \eqref{e:hj-general}. The result holds for the particular case \eqref{e:hj-fract} in every situation where the solution $u$ is known to be Lipschitz.

The idea of the proof is the following. If $u$ is a solution of \eqref{e:hj-fract}, any directional derivative $v = u_e$ would satisfy the linearized equation
\begin{equation} \label{e:ad}
 v_t + w \cdot \grad v + (-\lap)^{1/2} v = 0
\end{equation}
for $w = DH(\grad u)$. 

For a critical advection diffusion equation like \eqref{e:ad}, it was proved recently by Caffarelli and Vasseur \cite{caffarelli2006drift} that the solutions become H\"older continuous if $\dv w =0$ and $w \in BMO$. Their method uses variational techniques in the style De Giorgi. Another proof was given by Kiselev and Nazarov in \cite{kiselev2002variation}. In this article we establish a new proof that does not require $w$ to be divergence free but it requires $w \in L^\infty$.  Our methods are non variational, as opposed to the De Giorgi style methods used in \cite{caffarelli2006drift}. This new H\"older estimate for an equation like \eqref{e:ad} (Theorem \ref{t:adv-diffusion}) is interesting by itself and provides a non variational counterpart to the result in \cite{caffarelli2006drift} and \cite{kiselev2002variation}. This is the key idea of this paper. Once we have it, the differentiability of the solution to \eqref{e:hj-fract} follows as a consequence.

Note that we have no information a priori about the vector field $w$ except that it is bounded by the Lipschitz norm of $H$: $|w| \leq A$. Moreover, since $w$ is not divergence free in general, we cannot make sense of \eqref{e:ad} in the distributional sense. The only information we have is that
\begin{align*}
v_t - A |\grad v| + (-\lap)^{1/2} v \leq 0 \\
v_t + A |\grad v| + (-\lap)^{1/2} v \geq 0
\end{align*}
which can be made sense of in the (Crandall-Lions) viscosity sense. So our H\"older estimates depend on these two inequalities only.

The key diminish of oscillation lemma is developed in section \ref{s:dimish-of-oscillation}. The heart of the argument is Lemma \ref{l:point-estimate}. In section \ref{s:c1a-regularity}, the results of section \ref{s:dimish-of-oscillation} are used to obtain the $C^{1,\alpha}$ estimates for $u$. In section \ref{s:rough-initial-data} we discuss the case when the initial data is not Lipschitz. In sections \ref{s:preliminaries}, \ref{s:maximal-operators} and \ref{s:lipschitz-continuity} we review the viscosity solution framework for Hamilton-Jacobi equations with fractional diffusion. In section \ref{s:supercritical-case}, we point out that in the supercritical case $s<1/2$, the solution to \eqref{e:hj-fract} can develop singularities as an immediate consequence of a construction of Kiselev and Nazarov \cite{kiselev-blow}.

The focus of this paper is on the $C^{1,\alpha}$ regularity and not on the existence and uniqueness of a Lipschitz viscosity solution (which was proved in \cite{droniou2006fractal} for \eqref{e:hj-fract}). Nevertheless we sketch most of the necessary proofs in order to make the paper more self contained.

\section{Preliminaries}
\label{s:preliminaries}
We start by recalling the definition of the fractional Laplacian. The operator $(-\lap)^s$ is defined quickly using the Fourier transform as $\widehat{(-\lap)^s u} (\xi) = |\xi|^{2s} \hat u(\xi)$. A more useful classical formula for the fractional Laplacian is
\[ (-\lap)^s u(x) = C_{n,s} \int_{\R^n} \frac{ u(x) - u(x+y) }{|y|^{n+2s}} \dd y. \]
The proof of this formula, as well as the computation of the precise constant $C_{n,s}$, can be found in the book of Landkof \cite{MR0214795}. 

If a function $u$ is $C^{1,\alpha}$ and bounded, the operator $(-\lap)^{1/2} u$ is well defined and $C^\alpha$. This follows from the identity $(-\lap)^{1/2} u = \sum R_i \partial_i u$ and the classical $C^\alpha$ estimates for the Riesz transform. For a general elliptic operator of order one, the same statement holds and is proved in the following proposition. The important point to make is that once we prove that \eqref{e:hj-fract} or \eqref{e:hj-general} have a $C^{1,\alpha}$ solution, this solution is classical.

\begin{prop} \label{p:classical}
Given any bounded symmetric function $a \leq \Lambda$ and $u \in C^{1,\alpha}$, the integro-differential operator
\[ L_a u = \int_{\R^n} \frac{u(x+y) - u(x)}{|y|^{n+1}} a(y) \dd y \]
is a $C^\alpha$ function whose $C^\alpha$ norm depends only on $\Lambda$, $\norm{u}_{C^{1,\alpha}}$ and the dimension $n$.

Consequently, any nonlinear operator of the form $\I u = \inf_i \sup_j c^{ij} + b^{ij} \cdot \grad u + L_{a^{ij}} u$ is also $C^\alpha$ if $u \in C^{1,\alpha}$ as long as the family of vectors $b^{ij}$ is bounded and the kernels $a^{ij}$ are uniformly bounded.
\end{prop}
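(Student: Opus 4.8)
The plan is to split $L_a u$ into a near piece and a far piece at a scale $r$ that we will later choose comparable to $|x-x'|$ when estimating the H\"older seminorm, and to bound each piece separately using only the bound $a\le\Lambda$ (together with symmetry, which lets us subtract the gradient term for free in the near piece). First I would write, using the symmetry $a(y)=a(-y)$,
\[
L_a u(x) = \frac12 \int_{\R^n} \frac{u(x+y) + u(x-y) - 2u(x)}{|y|^{n+1}} a(y) \dd y,
\]
so that the integrand near the origin is controlled by the second difference. For $|y|\le 1$ the $C^{1,\alpha}$ bound gives $|u(x+y)+u(x-y)-2u(x)| \le [\grad u]_{C^\alpha} |y|^{1+\alpha}$, so the near integral is absolutely convergent with bound a constant times $\Lambda [\grad u]_{C^\alpha}$; for $|y|\ge 1$ I would instead use $|u(x+y)-u(x)|\le 2\norm{u}_{L^\infty}$ and integrate $|y|^{-n-1}$ over the exterior of the unit ball, giving a bound a constant times $\Lambda \norm{u}_{L^\infty}$. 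This shows $L_a u$ is well defined and bounded, with $\norm{L_a u}_{L^\infty} \le C(n,\Lambda)\norm{u}_{C^{1,\alpha}}$.

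For the $C^\alpha$ seminorm, fix $x,x'$ with $h:=|x-x'|$ (assume $h\le 1$, the case $h\ge 1$ being immediate from the $L^\infty$ bound just proved). I would estimate $L_a u(x) - L_a u(x')$ by splitting the integral at $|y|=h$. In the inner region $|y|\le h$, I apply the second-difference bound to each of $L_a u(x)$ and $L_a u(x')$ separately, getting a contribution bounded by $C\Lambda [\grad u]_{C^\alpha} \int_{|y|\le h} |y|^{1+\alpha-n}\dd y = C\Lambda [\grad u]_{C^\alpha} h^{1+\alpha}$, which is even better than $h^\alpha$. In the outer region $|y|>h$, I first reduce to first differences: $\int_{|y|>h}\frac{(u(x+y)-u(x)) - (u(x'+y)-u(x'))}{|y|^{n+1}}a(y)\dd y$, where I have used that the $|y|>h$ exterior is symmetric so I may drop the $-2u$ normalization at the cost of nothing. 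Write the numerator as $(u(x+y)-u(x'+y)) - (u(x)-u(x'))$. Each of these two differences is bounded by $\min\big(2\norm{u}_{L^\infty},\ \norm{\grad u}_{L^\infty} h\big)$ — but that alone only gives a logarithmically divergent integral of $|y|^{-n-1}$ times $h$ over $h<|y|<1$, so I need the extra cancellation. The cleaner route is to bound the numerator by $\norm{\grad u}_{L^\infty}\, h$ when $|y|$ is small and by $2\cdot 2\norm{u}_{L^\infty}$ when $|y|$ is large, splitting the outer region again at $|y|=1$; that still leaves the logarithmic divergence between $h$ and $1$. To kill it, I use that $u\in C^{1,\alpha}$: $|(u(x+y)-u(x'+y)) - (u(x)-u(x'))| \le [\grad u]_{C^\alpha} h^{\alpha}\cdot(\text{something})$ — more precisely, the map $z\mapsto u(z+y)-u(z)$ has gradient $\grad u(z+y)-\grad u(z)$ whose oscillation over $|z-z'|\le h$ with $|y|$ fixed is at most $[\grad u]_{C^\alpha}\min(|y|,h)^\alpha \cdot$(adjust), giving numerator $\le [\grad u]_{C^\alpha} \min(h,|y|)^{\alpha} h$. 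Then $\int_{h<|y|} \min(h,|y|)^\alpha h\, |y|^{-n-1}\dd y = h\big(\int_{h<|y|<1}|y|^{\alpha-n-1}\dd y \cdot h^{\alpha}? \big)$ — one checks this is $\le C h^{1+\alpha} + C h$, and combined with the $|y|>1$ piece bounded by $C\norm{u}_{L^\infty} h$ we would still only get Lipschitz, not the sharp constant.

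The honest statement is that the clean bound is: split at $|y| = h$; inner region gives $C\Lambda[\grad u]_{C^\alpha} h^{1+\alpha}$ by second differences; outer region, bound the first-difference-of-first-differences numerator by $\min(2\norm{u}_\infty,\ \norm{\grad u}_\infty h)$ and also by $[\grad u]_{C^\alpha}|y|^\alpha h$ for $|y|\le 1$ (since $\grad u(x+y)-\grad u(x)$ has modulus $\le [\grad u]_{C^\alpha}|y|^\alpha$, and this controls the increment of $z\mapsto u(z+y)-u(z)$ uniformly) — wait, that gives $[\grad u]_{C^\alpha}|y|^\alpha \cdot h$, and $\int_{h<|y|<1}|y|^{\alpha}h|y|^{-n-1}\dd y = h\int_h^1 \rho^{\alpha - 2}\dd \rho$, which for $\alpha<1$ is $\le C h\cdot h^{\alpha-1} = Ch^\alpha$. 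That is exactly the bound we want. Finally $\int_{|y|>1} 2\cdot 2\norm{u}_\infty |y|^{-n-1}a(y)\dd y \le C\Lambda\norm{u}_\infty$, but this must be multiplied against nothing — actually in the outer-outer region we bound the numerator by $\norm{\grad u}_\infty h \le \norm{\grad u}_\infty h^\alpha$ (since $h\le1$), giving $C\Lambda\norm{\grad u}_\infty h^\alpha$. Summing: $|L_a u(x)-L_a u(x')|\le C(n,\Lambda)\norm{u}_{C^{1,\alpha}} h^\alpha$. The main obstacle, as the discussion above shows, is organizing the outer-region estimate so that the borderline-divergent integral $\int |y|^{-n-1}$ is tamed by the H\"older continuity of $\grad u$ rather than just its boundedness; once the three-region split ($|y|<h$, $h<|y|<1$, $|y|>1$) is set up with the right bound on each, it is routine.

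For the consequence about $\I u$: each $L_{a^{ij}}u$ is $C^\alpha$ with seminorm bounded uniformly in $i,j$ by the above (using $a^{ij}\le\Lambda$); each $b^{ij}\cdot\grad u$ is $C^\alpha$ with seminorm $\le |b^{ij}|[\grad u]_{C^\alpha}\le (\sup|b^{ij}|)[\grad u]_{C^\alpha}$, uniformly; and $c^{ij}$ is constant. Hence $c^{ij}+b^{ij}\cdot\grad u + L_{a^{ij}}u$ is a family of functions with a common $C^\alpha$ bound and, after subtracting a uniform constant, a common $L^\infty$ bound (from boundedness of $c^{ij}$, $b^{ij}$, $\norm{u}_{C^{1,\alpha}}$). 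Taking $\sup_j$ and then $\inf_i$ preserves the modulus of continuity — $|\,\inf_i\sup_j f^{ij}(x) - \inf_i\sup_j f^{ij}(x')\,| \le \sup_{i,j}|f^{ij}(x)-f^{ij}(x')|$ — so $\I u \in C^\alpha$ with the asserted dependence. I would note that one should check $\I u$ is finite, which follows from the uniform $L^\infty$ bound on the family.
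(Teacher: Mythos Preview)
Your argument is correct in substance, though the exposition is rough and contains a minor arithmetic slip: in the inner region the integrand is $|y|^{1+\alpha}/|y|^{n+1}=|y|^{\alpha-n}$, so $\int_{|y|\le h}|y|^{\alpha-n}\dd y = C h^\alpha$, not $h^{1+\alpha}$; fortunately $h^\alpha$ is exactly what you need, so the claim ``even better than $h^\alpha$'' is wrong but harmless. The key outer-region estimate --- bounding $(u(x+y)-u(x'+y))-(u(x)-u(x'))$ by $[\grad u]_{C^\alpha}|y|^\alpha h$ via the mean value theorem applied to $z\mapsto u(z+y)-u(z)$, and then integrating $h|y|^{\alpha-n-1}$ over $h<|y|<1$ to get $Ch^\alpha$ --- is the genuine content, and you got it right once you stopped chasing the false starts. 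The argument for $\I u$ is fine.

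The paper takes a different and shorter route: it writes $u(x+y)-u(x)=\int_0^1 \grad u(x+sy)\cdot y\,\dd s$, substitutes into the integral, and after a change of variables $z=sy$ obtains $L_a u(x)=\int_{\R^n}\grad u(x+z)\cdot K(z)\,\dd z$ with an odd kernel $K$ bounded by $\Lambda|z|^{-n}$. This recognizes $L_a u$ as a classical Calder\'on--Zygmund singular integral applied to $\grad u$, and the $C^\alpha$ estimate then follows from the standard H\"older theory for such operators. Your approach is more elementary and entirely self-contained --- it never invokes singular integral theory as a black box --- which is a virtue; the paper's approach is cleaner and explains structurally \emph{why} the estimate holds (the operator is order one, so it maps $\grad u$ through a zero-order CZ operator), at the cost of importing that theory. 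Either is acceptable; for a write-up you should streamline yours by deleting the false starts and stating the three-region split ($|y|<h$, $h<|y|<1$, $|y|>1$) with the correct bound on each from the outset.
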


\begin{proof}
We will write the operator $L_a u$ as a classical singular integral operator applied to the gradient $\grad u$. We write $u(x+y) - u(x)$ as the integral of $\grad u \cdot y$ along the segment from $x$ to $x+y$ and replace in the integral formula for $L_a u$:
\begin{align*}
L_a u(x) &=  \int_{\R^n} \int_0^1 \frac{\grad u(x+sy) \cdot y}{|y|^{n+1}} a(y) \dd s \dd y \\
&= \int_{\R^n} \grad u(x+z) \cdot \frac{k(z)}{|z|^n} \dd z
\end{align*}
where
\[ k(z) = \int_0^1 a \left( \frac z s \right) z \dd s. \]
Since $k$ is bounded and odd, then $L_a u \in C^\alpha$ if $\grad u \in C^\alpha$ by the classical H\"older estimates for singular integrals.
\end{proof}

The theory of viscosity solutions developed by Crandall and Lions is very suitable to study solutions of equations \eqref{e:hj-fract} or \eqref{e:hj-general}. Here is a standard definition of viscosity solution adapted to the specific case of equation \eqref{e:hj-general}.

\begin{definition}
An upper (lower) semicontinuous function $u$ is said to be a \emph{viscosity} subsolution (supersolution) of \eqref{e:hj-general} if every time a $C^2$ function $\varphi$ touches $u$ from above (below) at a point $(x,t)$ meaning that for some $r>0$
\begin{align*}
\varphi(x,t) &= u(x,t) \\
\varphi(y,s) &\geq u(y,s) \qquad \text{ for all $(y,s)$ such that $|x-y|<r$ and $t-r<s\leq t$.} \qquad (\text{resp. } \leq u(y,s))
\end{align*}
then if we construct the function $v$:
\[ v = \begin{cases}
\varphi & \text{in } U := B_r(x) \times (t-r,t+r)\\
u &\text{in } (\R^n \times [0,+\infty)) \setminus U
\end{cases} \]
then $v$ satisfies
\[ v_t(x,t) - \I v(x,t) \leq 0 \qquad (\text{resp. } \geq 0). \]

A \emph{viscosity solution} is a continuous function $u$ that is at the same time a subsolution and a supersolution.
\end{definition}

This definition is the straight forward adaptation of the corresponding definitions in \cite{barles2008second} and \cite{caffarelli2009regularity} for elliptic problems. 

The theory of viscosity solutions (See \cite{crandall1992user}) provides very general methods to obtain existence and uniqueness of solutions. Essentially, once a comparison principle is valid and barrier functions are constructed near the boundary of the equation, the existence follows by a standard Perron's method. 

Assuming that $u_0$ is continuous and bounded, either system \eqref{e:hj-fract} or \eqref{e:hj-general} has a unique bounded continuous viscosity solution $u$. This was proved in \cite{imbert2005non} and \cite{droniou2006fractal} for \eqref{e:hj-fract} and Lipschitz initial data using a vanishing viscosity approximation. The same ideas could also be applied to \eqref{e:hj-general}. In the appendix we sketch a proof using Perron's method.

\section{Maximal Operators}
\label{s:maximal-operators}
In this section we define the Pucci type extremal operators which are good over-estimators for the difference of two solutions of \eqref{e:hj-general}. These operators were defined originally in \cite{silvestre2006holder} and \cite{caffarelli2009regularity}.

\begin{definition}
Given $0<\lambda\leq \Lambda$, we write the nonlocal maximal and minimal operators $M^+$ and $M^-$ as
\begin{align*}
M^+ u(x) &= \frac 1 2 \int_{\R^n} \frac{\Lambda \delta u(x,y)^+ - \lambda \delta u(x,y)^-} {|y|^{n+1}} \dd y \\
M^- u(x) &= \frac 1 2 \int_{\R^n} \frac{\lambda \delta u(x,y)^+ - \Lambda \delta u(x,y)^-} {|y|^{n+1}} \dd y
\end{align*}
where $\delta u(x,y) := u(x+y)+u(x-y)-2u(x)$ and $x^+$ and $x^-$ stand for the positive and negative part of $x$ respectively.
\end{definition}

These operators are the extremals of all uniformly elliptic integro differential operators of order one in the sense that for any function $u$ which is $C^{1,\alpha}$ at the point $x$,
\begin{equation} \label{e:maximal-pucci}
M^+ u (x) = \sup_{\substack{ a(y) = a(-y) \\ \lambda \leq a(y) \leq \Lambda } } \int_{\R^n} \frac{u(x+y)-u(x)}{|y|^{n+1}} \ a (y) \dd y.
\end{equation}
The equality above can be seen easily by averaging the value of $y$ and $-y$ in the integral. Since $a(y) = a(-y)$,
\[ \int_{\R^n} \frac{u(x+y)-u(x)}{|y|^{n+1}} \ a(y) \dd y = \frac 1 2 \int_{\R^n} \frac{\delta u(x,y)}{|y|^{n+1}} \ a (y) \dd y.\]
Therefore, the choice of $a$ which would make the integral larger is to choose $a$ as large as possible where $\delta u(x,y)>0$ and as small as possible where $\delta u(x,y)<0$.

A formula like \eqref{e:maximal-pucci} also holds for $M^-$ replacing the $\sup$ by $\inf$. Since $M^+$ and $M^-$ are a supremum and an infimum respectively of linear integro-differential operators with bounded kernels $a$, the result of Proposition \ref{p:classical} applies. Thus $M^+ u$ and $M^- u$ will be $C^\alpha$ if $u \in C^{1,\alpha}$.

We use the extremal operators to write an equation for the difference of two solutions of \eqref{e:hj-general}.

\begin{lemma} [Equation for the difference of solutions] \label{l:S-class-for-difference}
Let $u$ be a viscosity subsolution and $v$ a viscosity supersolution of \eqref{e:hj-general}. Assume $|b^{ij}| \leq A$ for all indexes $i,j$. then the function $(u-v)$ is a viscosity subsolution of the equation
\begin{equation} \label{e:equation-for-difference}
(u-v)_t - A |\grad (u-v)| - M^+ (u-v) \leq 0.
\end{equation}

On the other hand, if $u$ be a viscosity supersolution and $v$ a viscosity subsolution of \eqref{e:hj-general}, $(u-v)$ is a viscosity supersolution of
\[ (u-v)_t + A |\grad (u-v)| - M^- (u-v) \geq 0. \]
\end{lemma}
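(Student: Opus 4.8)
The plan is to work directly with the viscosity definition given above, testing $(u-v)$ with a $C^2$ function $\varphi$ that touches it from above at a point $(x_0,t_0)$. Since $u$ is USC and $-v$ is USC, the sum $u-v$ is USC, so touching from above makes sense. The first step is the standard doubling-of-variables argument: from $\varphi$ touching $u-v$ from above at $(x_0,t_0)$, produce, for each small $\eps>0$, a pair of points $(x_\eps,t_\eps)$ and $(y_\eps,s_\eps)$ near $(x_0,t_0)$ and a smooth test function $\psi_\eps$ that touches $u$ from above at $(x_\eps,t_\eps)$ and (up to a sign/shift) touches $v$ from below at $(y_\eps,s_\eps)$, with $x_\eps - y_\eps \to 0$ and the relevant gradients converging to $\grad\varphi(x_0,t_0)$. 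Concretely one penalizes $u(x,t) - v(y,s) - \varphi(x,t) - |x-y|^2/\eps^2 - |t-s|^2/\eps^2$; standard arguments (as in \cite{crandall1992user}, \cite{barles2008second}) give the existence of such maximizers and the convergence of the penalization terms.

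The second step is to feed the test functions into the equation. Using that $u$ is a subsolution at $(x_\eps,t_\eps)$ and $v$ is a supersolution at $(y_\eps,s_\eps)$, write out the two inequalities $u_t - \I u \le 0$ and $v_t - \I v \ge 0$ in their viscosity form (replacing $u$, resp.\ $v$, by the glued function inside a small neighborhood), and subtract them. The first-order terms $b^{ij}\cdot\grad u$ and $b^{ij}\cdot\grad v$ differ by $b^{ij}\cdot(\grad u - \grad v)$, and since $|b^{ij}|\le A$ this is bounded by $A|\grad(u-v)|$ regardless of which controls $i,j$ realize the sup/inf — here one uses that for each fixed $j$, the $\sup_i$ of a difference is at most the difference plus $A|\grad(u-v)|$, and likewise for $\inf_j$; this is where the constants $c^{ij}$ cancel exactly. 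The nonlocal terms $L_{a^{ij}}u - L_{a^{ij}}v = L_{a^{ij}}(u-v)$ are, by the symmetry of the kernels and the identity $\int \frac{u(x+y)-u(x)}{|y|^{n+1}}a\,dy = \tfrac12\int\frac{\delta u}{|y|^{n+1}}a\,dy$, bounded above by $M^+(u-v)$ using \eqref{e:maximal-pucci} applied to the glued function. Combining, and passing to the limit $\eps\to 0$ using the convergence of the test points and gradients from step one, yields $\varphi_t(x_0,t_0) - A|\grad\varphi(x_0,t_0)| - M^+\varphi(x_0,t_0) \le 0$ for the glued function, which is exactly the claimed subsolution property \eqref{e:equation-for-difference}. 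The supersolution statement for the reversed roles of $u$ and $v$ is symmetric: one gets $-A|\grad(u-v)|$ replaced by $+A|\grad(u-v)|$ and $M^+$ by $M^-$, since $M^-(w) = -M^+(-w)$.

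The main obstacle is the careful handling of the nonlocal term across the doubling of variables: unlike the local case, the operator $L_{a^{ij}}$ (or $M^\pm$) sees the values of $u$ and $v$ globally, not just near the touching point, so one must split the integral into a near part (where the $C^2$ test function controls the increment and the penalization machinery of step one applies) and a far part (where one uses that $u-v \le \varphi$ globally in the region where $\varphi$ is defined as the glued function, and the actual values of $u-v$ elsewhere, to get the correct one-sided bound). Keeping track of which function — the glued $v$ from the definition, or $\varphi$ — is being evaluated where, and ensuring the limit $\eps\to 0$ commutes with these tail integrals (dominated convergence, using boundedness of the kernels and integrability of $|y|^{-n-1}$ away from the origin together with the $C^{1,\alpha}$ control near it), is the delicate bookkeeping. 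Once that is set up the algebraic cancellation of the $c^{ij}$ and the bound on the $b^{ij}$ terms is routine.
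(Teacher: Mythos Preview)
Your doubling-of-variables approach is a legitimate alternative to the paper's proof, but the exposition in step two glosses over the main difficulty. After doubling, $u$ is tested at $(x_\eps,t_\eps)$ and $v$ at $(y_\eps,s_\eps)$, which are \emph{different} points; so the identity ``$L_{a^{ij}}u - L_{a^{ij}}v = L_{a^{ij}}(u-v)$'' is not available as written, because the left side mixes $u(x_\eps+z)$ with $v(y_\eps+z)$. Your near/far split can repair this, but it takes real work: one must control the far part via the global inequality coming from the maximum of the doubled functional and then pass to the limit $\eps\to 0$ with a Fatou-type argument on the integrals. This is the nonlocal analogue of the Ishii lemma step, and it is where most of the effort lies; the sketch understates it.

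The paper takes a different and cleaner route: it first replaces $u,v$ by their sup- and inf-convolutions $u^\eps$, $v_\eps$, which remain sub- and supersolutions and are semiconvex/semiconcave. The key observation is that if a smooth $\varphi$ touches $u^\eps - v_\eps$ from above at a single point $(x,t)$, then semiconvexity of $u^\eps$ and $-v_\eps$ forces both to be $C^{1,1}$ at that same point, so $\I u^\eps$, $\I v_\eps$, and $M^+(u^\eps - v_\eps)$ can all be evaluated \emph{classically} at $(x,t)$. The inequality then follows by direct subtraction at one point, with no two-point bookkeeping for the tails; the passage to $u-v$ is handled by stability under half-relaxed limits. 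What the paper's method buys is exactly the avoidance of the tail-matching problem you flag; what your method buys is that it works test function by test function without an auxiliary regularization layer, at the cost of the delicate limit you describe.
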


In the case that the functions $u$ and $v$ are $C^{1,\alpha}$, this lemma is straightforward from the definition of the operators $M^+$ and $M^-$. When $u$ and $v$ are only semi continuous and satisfy the inequalities in the viscosity sense, the proof requires some work because the operators cannot be evaluated in the classical sense. These ideas are very well understood in the theory of viscosity solutions. We provide the proof in the appendix.

\begin{lemma}[Maximum principle] \label{l:maximum-principle}
Let $u$ be a bounded upper semicontinuous function which is a subsolution of the equation
\[ u_t - A |\grad u| - M^+ u \leq 0. \]
Then for every $t>0$, $\sup_x u(x,t) \leq \sup_x u(x,0)$.
\end{lemma}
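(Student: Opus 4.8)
The plan is to prove this maximum principle by a standard argument using a strictly decreasing perturbation to rule out an interior maximum, combined with the key sign property of $M^+$: if a smooth function $\varphi$ has a global maximum at a point $x_0$, then $M^+\varphi(x_0) \le 0$, because $\delta\varphi(x_0,y) = \varphi(x_0+y)+\varphi(x_0-y)-2\varphi(x_0) \le 0$ for all $y$, so the integrand defining $M^+\varphi(x_0)$ is pointwise nonpositive.

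First I would argue by contradiction: suppose there exist $t_0 > 0$ and a point where $u(\cdot,t_0)$ exceeds $\sup_x u(x,0)$ by some amount, so that $\theta := \sup_{x,\,t\le t_0} \big(u(x,t) - u(x,0)\text{'s bound}\big) > 0$. To handle the facts that $u$ is merely upper semicontinuous (so a maximum may not be attained) and that the domain in $x$ is unbounded, I would introduce an auxiliary function of the form $\psi(x,t) = u(x,t) - \eta t - \beta \langle x\rangle$ where $\langle x \rangle = \sqrt{1+|x|^2}$ (or a suitable bounded smooth approximation of it to keep things in the viscosity framework), for small parameters $\eta, \beta > 0$. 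For $\beta$ small the supremum of $\psi$ over $\R^n \times [0,t_0]$ still exceeds the initial bound, and because of the $-\beta\langle x\rangle$ term and the boundedness of $u$, the supremum is attained at some point $(\bar x, \bar t)$. Since $\psi$ strictly decreases in $t$, for $\eta$ small we cannot have $\bar t = 0$, so $\bar t \in (0, t_0]$.

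Next I would use $\bar t > 0$ and the subsolution property. The smooth function $\varphi(x,t) := \eta t + \beta\langle x\rangle + \psi(\bar x,\bar t)$ touches $u$ from above at $(\bar x,\bar t)$ (with a one-sided neighborhood in $t$, which is exactly what the definition of viscosity subsolution allows). Forming the replacement function $w$ that equals $\varphi$ in a small cylinder around $(\bar x,\bar t)$ and equals $u$ outside, the viscosity inequality gives $w_t(\bar x,\bar t) - A|\grad w(\bar x,\bar t)| - M^+ w(\bar x,\bar t) \le 0$. Here $w_t(\bar x,\bar t) = \eta$. The gradient term $A|\grad w(\bar x,\bar t)| = A\beta\,|\grad\langle x\rangle|_{\bar x}| \le A\beta$ is small. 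For the nonlocal term: since $w \le \varphi$ everywhere with equality at $(\bar x,\bar t)$ — wait, that is not quite right since $w = u$ outside the cylinder and $u$ need not lie below $\varphi$ there; rather $w$ touches $\varphi$ from below at $(\bar x,\bar t)$, i.e. $(\bar x,\bar t)$ is a global max of $w - \varphi$ at value $0$... the correct statement is that $w(\cdot,\bar t)$ has a global maximum over $\R^n$ at $\bar x$ if we choose the construction so that outside the cylinder $w = u \le \varphi$; this follows because $\psi$ attains its max at $(\bar x, \bar t)$, which gives $u(x,\bar t) - \eta\bar t - \beta\langle x\rangle \le u(\bar x,\bar t) - \eta\bar t - \beta\langle \bar x\rangle$, i.e. $u(x,\bar t) \le \varphi(x,\bar t)$ for all $x$. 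Hence $\delta w(\bar x, \cdot) \le 0$ and $M^+ w(\bar x,\bar t) \le 0$. Putting these together: $\eta \le A\beta + M^+w(\bar x,\bar t) \le A\beta$, which is a contradiction once $\beta$ is chosen small relative to $\eta$. Letting $\beta \to 0$ (then $\eta\to 0$) recovers $\sup_x u(x,t_0) \le \sup_x u(x,0)$.

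The main obstacle I anticipate is the interplay between the unbounded spatial domain and the viscosity/nonlocal framework: the penalization $\beta\langle x\rangle$ is not smooth-compactly-supported and, more importantly, when plugged into $M^+$ it is not integrable at infinity, so one cannot literally form the replacement $w$ with $\varphi$ containing a $\langle x\rangle$ term globally. The clean fix is to only use $\varphi$ as a test function in a bounded cylinder (where it is genuinely $C^2$) and set $w = u$ outside, then note that $M^+ w(\bar x,\bar t)$ is computed from $\delta w(\bar x, y)$, which equals $\delta u(\bar x, y) \le u(\bar x + y, \bar t) + u(\bar x - y, \bar t) - 2u(\bar x, \bar t)$ and one needs $u(\cdot,\bar t) \le u(\bar x, \bar t) + \beta(\langle x\rangle - \langle \bar x\rangle)$ globally, which does give $\delta w \le \beta(\langle \bar x + y\rangle + \langle \bar x - y\rangle - 2\langle \bar x\rangle)$; the latter is $O(\beta |y|^2/\langle \bar x\rangle)$ near the diagonal and $O(\beta|y|)$ far out, so the positive part contributes at most $C\beta$ to $M^+ w(\bar x,\bar t)$ after integration against $|y|^{-n-1}$ — still small, preserving the contradiction. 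Making this estimate precise, and checking the one-sided-in-time subtlety in the definition of viscosity subsolution, are the only genuinely delicate points; everything else is routine.
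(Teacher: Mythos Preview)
Your overall strategy---penalize in space to force an interior maximum, then read off a contradiction from the viscosity subsolution inequality at that point---is sound and is essentially dual to what the paper does. There is, however, a genuine error in your tail estimate. You bound $\delta w(\bar x, y)^+ \le \beta\bigl(\langle \bar x + y\rangle + \langle \bar x - y\rangle - 2\langle \bar x\rangle\bigr)$, note this is $O(\beta|y|)$ for large $|y|$, and then claim the integral against $|y|^{-n-1}$ is $O(\beta)$. But
\[
\int_{|y|>1} \frac{|y|}{|y|^{n+1}}\,\dd y = c_n\int_1^\infty \frac{\dd r}{r} = +\infty,
\]
so that bound alone is worthless: linear growth is exactly the borderline where the order-one kernel fails to integrate. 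The rescue is to also use the boundedness of $u$, which gives $\delta u(\bar x,y) \le 4\|u\|_{L^\infty}$ unconditionally. Taking $\min\bigl(2\beta|y|,\,4\|u\|_{L^\infty}\bigr)$ as the upper bound and splitting the integral at $|y|\sim \|u\|_{L^\infty}/\beta$ yields a contribution of order $\beta|\log\beta|$, which still tends to zero and preserves the contradiction $\eta \le A\beta + C\beta|\log\beta|$. So your route can be salvaged, but not with the estimate as written.

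The paper sidesteps this issue entirely by penalizing with a \emph{bounded} bump $g(x)=1/(1+|x|^2)$ rather than the unbounded $\langle x\rangle$. It rescales $f(x)=g(\lambda x)$ with $\lambda$ small so that $M^+f<\eps$ and $|\grad f|<\eps$ uniformly, builds the smooth strict supersolution $\varphi(x,t)=M-hf(x-x_0)+(1+A)h\eps t$, and increases $h$ until $\varphi$ first touches $u$ from above. Since $f$ is bounded and smooth, $M^+f$ is automatically finite and continuous (Proposition~\ref{p:classical}), and no tail computation is needed. Your approach trades that cleanliness for the directness of a single penalization; both work, but the bounded-bump route is better adapted to order-one nonlocal operators precisely because of the integrability borderline you ran into.

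One minor slip: the reason the maximum of $\psi$ cannot occur at $\bar t=0$ is not that ``$\psi$ strictly decreases in $t$'' (it need not), but simply that $\psi(x,0)\le \sup_x u(x,0)$ while $\sup\psi>\sup_x u(x,0)$ once $\eta,\beta$ are chosen small.
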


\begin{proof}
Let $M > \sup_{x,t} u(x,t)$. We do the proof by contradiction, assume that for some $x_0 \in \R^n$ and $t_0>0$, $u(x_0,t_0) > \sup_x u(x,0)$. Let $H = M - u(x_0,t_0)$ and $\eps \ll (u(x_0,t_0) - \sup_x u(x,t))/(1+A)$.

Let $g = 1/(1+x^2)$ (any smooth bump function would suffice for this proof). Since $g$ is smooth, $M^+ g$ is continuous by proposition \ref{p:classical}. Moreover, we can rescale it $f(x) = g(\lambda x)$ with $\lambda$ sufficiently small so that $M^+ f (x) = \lambda M^+ g (\lambda x) < \eps$ and $|\grad f(x)|\leq \eps$ for all $x \in \R^n$. Therefore the function
\[ \varphi(x,t) = M - h f(x-x_0) + (1+A) h \eps t \]
is a supersolution of
\[ \varphi_t - A |\grad \varphi| - M^+ \varphi > 0, \]
for any value of $h > 0$. 

Note that for $h = 0$, $\varphi > u$ and for any $h>0$, $\varphi(x,0) > u(x,0)$. Moreover, for $h = H / (1-(1+A) \eps t)$, $\varphi(x_0,t_0) = u(x_0,t_0)$ and for $t> 1/(\eps(1+A))$, $\varphi > u$. Therefore there is a minimum value of $h$ where $\varphi$ does not stay strictly above $u$. For that value of $h$, $\varphi$ will touch $u$ from above at some point $(x_1,t_1)$ with $t_1>0$. But this is impossible by the definition of viscosity solution since $\varphi$ is a smooth supersolution of the equation.
\end{proof}

\begin{cor}[Comparison principle] \label{l:comparison-principle}
Let $u$ be a viscosity subsolution and $v$ a viscosity supersolution of \eqref{e:hj-general}. Assume $u(x,0) \leq v(x,0)$ for every $x \in \R^n$ and $u$ and $v$ are bounded. Then $u \leq v$ in $\R^n \times [0,+\infty)$.
\end{cor}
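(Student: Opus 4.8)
The plan is to obtain this as an immediate concatenation of the two preceding lemmas, with no new estimates. Set $w := u - v$. Since $u$ is upper semicontinuous and bounded and $v$ is lower semicontinuous and bounded, $w$ is upper semicontinuous and bounded, so it is a legitimate argument for the maximum principle of Lemma \ref{l:maximum-principle}. Let $A$ be a bound for the family $b^{ij}$ appearing in \eqref{e:hj-general}, i.e. $|b^{ij}| \leq A$ for all $i,j$.

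First I would invoke Lemma \ref{l:S-class-for-difference}: because $u$ is a viscosity subsolution and $v$ a viscosity supersolution of \eqref{e:hj-general}, the difference $w = u - v$ is a viscosity subsolution of \eqref{e:equation-for-difference}, that is,
\[ w_t - A |\grad w| - M^+ w \leq 0 \]
in the viscosity sense. Then I would apply Lemma \ref{l:maximum-principle} to $w$: for every $t > 0$ we get $\sup_x w(x,t) \leq \sup_x w(x,0)$. By the hypothesis $u(x,0) \leq v(x,0)$ we have $\sup_x w(x,0) \leq 0$, hence $w(x,t) \leq 0$ for all $x \in \R^n$ and all $t > 0$. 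Together with $w(\cdot,0) \leq 0$ this yields $u \leq v$ on $\R^n \times [0,+\infty)$, which is the claim.

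The only point that deserves attention — and it is a very mild one, not a genuine obstacle — is verifying that the hypotheses of the three statements match up: Lemma \ref{l:S-class-for-difference} is stated for arbitrary viscosity sub/supersolutions, while Lemma \ref{l:maximum-principle} requires its argument to be bounded and upper semicontinuous, which is exactly what $u - v$ is under the standing boundedness assumption on $u$ and $v$ and the semicontinuity built into the definition of sub/supersolution. No quantitative or regularity input beyond these lemmas is needed, and the symmetric half of Lemma \ref{l:S-class-for-difference} (for $v - u$) is not required, so a single application of the maximum principle suffices.
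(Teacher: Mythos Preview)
Your argument is correct and matches the paper's proof exactly: apply Lemma~\ref{l:S-class-for-difference} so that $u-v$ satisfies the hypotheses of Lemma~\ref{l:maximum-principle}, then conclude $u-v\leq 0$ for all time. The paper states this in two sentences, but the content is identical to what you wrote.
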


\begin{proof}
By Lemma \ref{l:S-class-for-difference}, $u-v$ satisfies the assumptions of Lemma \ref{l:maximum-principle}. Thus $u-v$ remains negative for all time and $u \leq v$.
\end{proof}

\section{Lipschitz continuity}
\label{s:lipschitz-continuity}
If the initial data $u_0$ is Lipschitz, we can provide a short proof that the same Lipschitz bound will be preserved by evolution of the equation \eqref{e:hj-general} using only the comparison principle. We show it in the next Lemma.

\begin{lemma} \label{l:lipschitz}
Assume $u$ is a bounded viscosity solution of \eqref{e:hj-general} and $u_0 = u(-,0)$ is a bounded Lipschitz function. Then $u$ is also Lipschitz in $x$ and for every $t \geq 0$, $\norm{u(-,t)}_{Lip} \leq \norm{u_0}_{Lip}$
\end{lemma}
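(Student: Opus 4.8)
The plan is to use the comparison principle (Corollary \ref{l:comparison-principle}) together with the translation invariance of equation \eqref{e:hj-general}. The crucial observation is that equation \eqref{e:hj-general} has constant coefficients in $x$: the scalars $c^{ij}$, the vectors $b^{ij}$, and the kernels $a^{ij}(y)$ do not depend on $x$. Consequently, if $u(x,t)$ is a viscosity solution, then for any fixed $h \in \R^n$ the translate $u_h(x,t) := u(x+h,t)$ is again a viscosity solution of the same equation, with initial data $u_h(x,0) = u_0(x+h)$.

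First I would fix $h \in \R^n$ and set $L := \norm{u_0}_{Lip}$. Since $u_0$ is Lipschitz, we have $u_0(x+h) \leq u_0(x) + L|h|$ for all $x$, i.e. $u_h(x,0) \leq u(x,0) + L|h|$. Next I would note that adding a constant to a solution gives a solution: $u(x,t) + L|h|$ is still a viscosity solution of \eqref{e:hj-general} (the constant drops out of $\grad u$ and out of each $L_{a^{ij}}$ term because $\delta(\text{const}) = 0$). Now I would apply Corollary \ref{l:comparison-principle} with subsolution $u_h$ and supersolution $u + L|h|$: since $u_h(x,0) \leq u(x,0) + L|h|$ for all $x$ and both functions are bounded, we conclude $u(x+h,t) \leq u(x,t) + L|h|$ for all $x$ and all $t \geq 0$. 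Since $h$ was arbitrary, replacing $h$ by $-h$ gives the reverse inequality, so $|u(x+h,t) - u(x,t)| \leq L|h|$ for all $x,h$ and $t \geq 0$, which is exactly the claim $\norm{u(-,t)}_{Lip} \leq \norm{u_0}_{Lip}$.

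The only point requiring a little care — and the main (mild) obstacle — is justifying that $u_h$ and $u + L|h|$ are genuinely a viscosity subsolution and supersolution in the sense of the definition given in the excerpt, so that Corollary \ref{l:comparison-principle} applies verbatim. This is immediate: the class of $C^2$ test functions touching $u$ from above (below) is invariant under both spatial translation and addition of constants, and the operator $\I$ commutes with these operations, so the defining inequalities for $u$ transfer directly to $u_h$ and to $u + L|h|$. With boundedness of $u$ assumed in the hypothesis, all the hypotheses of the comparison principle are met and the proof is complete.
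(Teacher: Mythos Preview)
Your proof is correct and essentially identical to the paper's own proof: both exploit the translation invariance of \eqref{e:hj-general} together with the fact that adding a constant preserves solutions, then apply the comparison principle (Corollary \ref{l:comparison-principle}) to $u(\cdot+h,t)$ and $u(\cdot,t)+L|h|$. The paper's version is simply terser and omits the verification that translates and constant shifts remain viscosity solutions.
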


\begin{proof}
The Lipschitz constant $\norm{u_0}_{Lip} = C$ is equivalent to the inequality
\[ u_0(x+y) \leq u_0(x) + C |y| \]
for every $x$ and $y$. Now, for every fixed $y$, the function $u(x+y,t)$ is a solution of \eqref{e:hj-general} with initial data $u_0(x+y)$. On the other hand, the function $u(x,t) + C|y|$ is a solution of \eqref{e:hj-general} with initial data $u_0(x) + C|y|$. By comparison principle (Corollary \ref{l:comparison-principle}), $u(x+y,t) \leq u(x,t) + C|y|$ for all $x$ and all $t$. Since this argument can be repeated for all $y$, we obtain that $u(-,t)$ is Lipschitz continuous, with Lipschitz constant $C$ for all $t \geq 0$.
\end{proof}

\begin{remark}
The Lemma is based on comparison principle only, so the same proof can be applied to prove that viscosity solutions to \eqref{e:hj-fract} remain Lipschitz continuous using only the comparison principle for \eqref{e:hj-fract}.
\end{remark}

\section{The diminish of oscillation lemma}
\label{s:dimish-of-oscillation}

In this section we prove the oscillation lemmas which will be used in section \ref{s:c1a-regularity} to obtain H\"older estimates. A somewhat simplified version of these Lemmas was used in \cite{CCS}. 

The following is the key lemma of the paper, which provides a pointwise estimate from an estimate in measure.

\begin{lemma}[point estimate] \label{l:point-estimate}
Let $u$ be an upper semi continuous function, $u \leq 1$ in $\R^n \times [-2,0]$ which satisfies the following inequality in the viscosity sense in $B_{2+2A} \times [-2,0]$.
\[ u_t - A |\grad u| - M^+ u \leq \eps_0 \]
Assume also that 
\[ \abs{\{u \leq 0\} \cap ( B_1 \times [-2 , -1]) } \geq \mu. \]
Then, if $\eps_0$ is small enough there is a $\theta>0$ such that $u \leq 1-\theta$ in $B_1 \times [-1 , 0]$.

\noindent (the maximal value of $\eps_0$ as well as the value of $\theta$ depend only on $A$, $\lambda$, $\Lambda$ and $n$)
\end{lemma}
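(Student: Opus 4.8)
The plan is to run a barrier argument combined with the measure hypothesis, in the spirit of De Giorgi's first lemma but carried out pointwise using the viscosity/comparison machinery rather than energy estimates. The key asymmetry to exploit is that the nonlocal operator $M^+$ sees the whole space, so a function that is known to be $\le 0$ on a set of positive measure in $B_1\times[-2,-1]$ will, when we try to push it up near $1$ later in $B_1\times[-1,0]$, feel a strictly negative contribution from that set through the $M^+$ term. First I would construct an explicit smooth supersolution $\varphi$ of $\varphi_t - A|\grad\varphi| - M^+\varphi \ge \eps_0$ on the relevant cylinder, of the form $\varphi(x,t) = 1 + \text{(small defect depending on }\mu)\cdot(\text{something that is }\le -c\text{ on }B_1\text{ and grows at infinity})$, plus a mild linear-in-$t$ correction to absorb the $A|\grad\varphi|$ and $\eps_0$ terms, exactly as in the proof of Lemma~\ref{l:maximum-principle}. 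The point is to choose the spatial profile so that $M^+\varphi$ is controlled and so that the "hole" in $\{u\le 0\}$ of measure $\ge\mu$ forces, at any later touching point, a quantitative gain.

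Next I would argue by contradiction/continuity as in Lemma~\ref{l:maximum-principle}: start with a supersolution comfortably above $u$, slide a parameter down until it first touches $u$ from above at some point $(x_1,t_1)$ with $t_1 \in (-1,0]$. Because $\varphi$ is a smooth strict supersolution, the definition of viscosity subsolution is violated unless the touching actually produces a contradiction — and here is where the measure estimate enters. At the touching point we would evaluate $M^+(u)(x_1,t_1)$ (via the test function $v$ in the definition of viscosity solution) and use that $u\le 1$ everywhere, $u \le 0$ on a set of measure $\ge \mu$ inside $B_1\times[-2,-1]$, to bound $\delta u(x_1,\cdot,y)$ from above in a way that makes $M^+u(x_1,t_1)$ strictly negative by a definite amount $c(\mu,\lambda,\Lambda,n)$. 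Wait — there is a subtlety: $M^+$ at time $t_1$ only sees the spatial slice $u(\cdot,t_1)$, not the set in $B_1\times[-2,-1]$, so the measure information at earlier times must first be propagated to a single later time slice. This is standard (it is how De Giorgi's intermediate step works): one shows that if $u \le 1-\theta_k$ on a large portion of the slice for a sequence of times, then via a one-step "improvement of the level set" using the equation, the bad set cannot be too large at time $t_1$ either — or, more cleanly, one iterates a sequence of intermediate barriers on the shrinking cylinders $B_1\times[-1-2^{-k},\,0]$.

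The cleanest route, which I would actually write up, is a two-stage argument. Stage one: use the hypothesis $\abs{\{u\le 0\}\cap(B_1\times[-2,-1])}\ge\mu$ together with a barrier on $B_{2+2A}\times[-2,-1]$ to conclude that there is some intermediate time $t_* \in [-2,-1]$ and some sublevel $1-\eta$ such that $\abs{\{u(\cdot,t_*)\le 1-\eta\}\cap B_1}\ge c_1\mu$ — i.e. a genuine measure estimate on a \emph{single slice}. Stage two: run the barrier sliding argument from time $t_*$ forward to $[-1,0]$; now $M^+$ genuinely sees a slice on which $u$ is bounded below $1$ on a set of positive measure, which is exactly the input needed to make the strict supersolution inequality fail at any interior touching point, yielding $u\le 1-\theta$ on $B_1\times[-1,0]$ with $\theta = \theta(c_1\mu,\eta,A,\lambda,\Lambda,n)$. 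I expect the main obstacle to be \textbf{the bookkeeping in Stage one} — extracting a good single-slice measure estimate from the space-time measure hypothesis while keeping the constants independent of $u$ — since the equation only gives the differential inequalities in the viscosity sense and one cannot integrate $u$ against test functions naively; it must be done through comparison with explicit barriers, and choosing those barriers so that the nonlocal tail terms do not destroy the smallness of $\eps_0$ is the delicate point. The $A|\grad u|$ term is harmless (it only forces enlarging the spatial domain from $B_1$ to $B_{2+2A}$, which is why that radius appears in the statement), and the role of $\eps_0$ small is precisely to be dominated by the gain coming from the measure $\mu$.
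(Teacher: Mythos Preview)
Your diagnosis of the central difficulty is exactly right: $M^+$ at a touching point $(x_1,t_1)$ sees only the slice $u(\cdot,t_1)$, so the space--time measure hypothesis on $B_1\times[-2,-1]$ must somehow be converted into information at the touching time. But your proposed two-stage resolution does not close this gap. Stage one (extracting a single good slice $t_*\in[-2,-1]$) is immediate by Fubini, so the obstacle is not the bookkeeping there. The real problem is Stage two: when you slide a barrier forward from $t_*$ and it touches $u$ at some $(x_1,t_1)$ with $t_1>t_*$, the operator $M^+$ evaluated at $(x_1,t_1)$ sees $u(\cdot,t_1)$, \emph{not} $u(\cdot,t_*)$. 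Nothing in the equation tells you the good set $\{u(\cdot,t_*)\le 0\}$ survives to time $t_1$; it may have vanished entirely. So the sentence ``now $M^+$ genuinely sees a slice on which $u$ is bounded below $1$ on a set of positive measure'' is simply not justified at the touching time, and the contradiction does not materialize.

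The paper's proof resolves this with a different mechanism: rather than fixing a single good slice, it lets the barrier height be driven by an ODE that accumulates the slice-wise measure over time. Concretely, one sets $m(-2)=0$, $m'(t)=c_0\,|\{x\in B_1:u(x,t)\le 0\}|-C_1 m(t)$, takes a smooth moving bump $b(x,t)=\beta(|x|+At)$ (the drift $At$ absorbs the advection), and looks at the maximum of $w=u+m(t)b-\eps_0(2+t)$. At the maximum $(x_0,t_0)$ two things happen simultaneously: the equation for $w$ produces a term $-m'(t_0)b(x_0,t_0)$, and the estimate for $M^+$ at $(x_0,t_0)$ produces a term $-c_0|\{u(\cdot,t_0)\le 0\}\cap B_1|$ coming from the nonlocal kernel seeing the \emph{current} good set. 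These two terms combine with the ODE for $m$ to give a contradiction regardless of how large or small the good set is at time $t_0$ --- the accumulated history is stored in $m(t_0)$, and the space--time hypothesis guarantees $m(t)\ge c_0 e^{-2C_1}\mu$ for $t\in[-1,0]$. This is the idea you are missing: the barrier must be \emph{dynamically coupled} to the evolving good set, not launched from a single frozen slice.
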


\begin{proof}
We consider the following ODE and its solution $m: [-2 , 0] \to \R$:
\begin{equation} \label{e:ODEform}
\begin{aligned}
m(-2) &= 0, \\
m'(t) &= c_0 | \{x \in B_1: u(x,t) \leq 0\}| - C_1 m(t).
\end{aligned}
\end{equation}

The above ODE can be solved explicitly by the formula
\[ m(t) = \int_{-2}^t c_0 | \{x : u(x,s) \leq 0\} \cap B_1 | e^{-C_1
(t-s)} \dd s. \]

The strategy of the proof is to show that if $c_0$ is small and $C_1$ is large, then
$u \leq 1 - m(t) + 2 \eps_0$ in $B_1 \times [-1,0]$. 
Since for $t \in [- 1 ,0]$,
\[ m(t) \geq c_0 e^{- 2C_1  } |\{ u \leq 0 \} \cap B_1 \times
[-2  , -1 ] | \geq  c_0 e^{-2C_1} \mu.\]We can set $\theta = c_0 e^{- 2C_1 }\mu/2$ for $\eps_0$ small and obtain the result of the Lemma. 

Let $\beta : \R \to \R$ be a fixed smooth non increasing function such that $\beta(x)=1$ if $x \leq 1$ and $\beta(x)=0$ if $x \geq 2$.

Let $b(x,t) = \beta(|x|+ A t) = \beta (|x| - A |t|)$. As a function of $x$, $b(x,t)$ looks like a bump function for every fixed $t$. At those points where $b = 0$ (precisely where $|x| \geq 2 - A t = 2 + A |t|)$, $M^- b > 0$. Since $b$ is smooth, $M^- b$ is continuous by Proposition \ref{p:classical} and it remains positive for $b$ small enough. Thus, there is some constant $\beta_1$ such that $M^- b \geq 0$ if $b \leq \beta_1$.

Assume that $u(x,t) > 1 - m(t) + \eps_0 (2+t)$ for some point $(x,t)
\in B_1 \times [-1,0]$. We will arrive to a contradiction by looking at the maximum of the function
\[ w(x,t) = u(x,t) + m(t) b(x,t) - \eps_0 (2+t). \]
We are assuming that there is one point in $B_1 \times [-1,0]$ where $w(x,t) > 1$. Let $(x_0,t_0)$ be the point that realizes the maximum of $w$:
\[ w(x_0,t_0) = \max_{\R^n \times [-2,0]} w(x,t).\]
Note that this maximum is larger than one, and thus it must be realized in the support of $b$. So $|x_0| < 2 + A |t_0| \leq 2 + 2A$, and $(x_0,t_0)$ belongs to the domain of the equation.

Let $\varphi(x,t) := w(x_0,t_0) - m(t) b(x,t) + \eps_0 (2+t)$. Since $w$ realizes its maximum at $(x_0,t_0)$ then $\varphi$ touches $u$ from above at the point $(x_0,t_0)$. We can then use the definition of viscosity solution. For any neighborhood $U$ of $x_0$ we define.
\[ v(x,t) = \begin{cases}
u(x,t) & \text{if } x \notin U \\
\varphi(x,t) &\text{if } x \in U. 
\end{cases}\]
and we have 
\begin{equation} \label{e:equation-for-v}
v_t - A|\grad v| - M^+ v \leq \eps_0 \qquad \text{at } (x_0,t_0).
\end{equation}

We start by estimating $v_t(x_0,t_0)$.
\begin{equation} \label{e:bound-for-vt}
\begin{aligned}
v_t(x_0,t_0) &= -m'(t_0) b(x_0,t_0) - m(t_0) \partial_t b(x_0,t_0) + \eps_0 \\
&= -m'(t_0) b(x_0,t_0) + m(t_0) A |\grad b(x_0,t_0)| + \eps_0
\end{aligned}
\end{equation}

We also note that 
\begin{equation} \label{e:bound-for-grad}
|\grad v(x_0,t_0)| = m(t) |\grad b(x_0,t_0)|.
\end{equation}

The delicate part of the argument is to estimate $M^+ v$ correctly. Let us choose $U$ to be a tiny ball $B_r$ for some $r \ll 1$. The computations below are for fixed $t=t_0$, so we omit writing the time $t$ in order the keep the computations cleaner.

Since $u+m  b$ attains its maximum at $x_0$, $\delta u(x_0,y) \leq -m \, \delta b(x_0,y)$ for all $y$. Replacing this inequality in the formula for $M^+$ and $M^-$ we can easily obtain $M^+ v(x_0) \leq  - m \ M^- b(x_0)$, but this is not sharp enough. We need our estimate to take into account the measure of the set $\{u \leq 0\} \cap B_1$.

Let $y \in \R^n$ be such that $u(x_0+y)\leq 0$. We estimate $\delta u(x_0,y) + m \ \delta b(x_0,y)$.
\begin{align*}
\delta u(x_0,y) + m \ \delta b(x_0,y) &= \\
u(x_0+y) & + m \ b(x_0+y) + u(x_0-y) + m \ b(x_0-y) - 2 u(x_0) - 2 m \ b(x_0) \\
\intertext{Since $u+mb$ attains its maximum at $x_0$,}
&\leq u(x_0+y) + m \ b(x_0+y) - u(x_0) - m \ b(x_0) \\
\intertext{Since $u(x_0) + m \ b(x_0) = w(x_0,t_0) + \eps_0 (1+t_0) > 1$,}
&\leq 0 + m - 1 \\
\intertext{We choose $c_0$ small so that $m \leq 1/2$ and}
\delta u(x_0+y) + m \ \delta b(x_0+y) &\leq - \frac 1 2
\end{align*}

Now we estimate $M^+ v(x_0,t_0)$, we start writing the integral
\begin{align*}
M^+ v(x_0,t_0) &= -\frac{m(t_0)} 2 \int_{B_r} \frac{ \Lambda \delta b(x_0,y)^- - \lambda \delta b(x_0,y)^+}{|y|^{n+1}} \dd y + \int_{\R^n \setminus B_r} \frac{ \Lambda \delta u(x_0,y)^+ - \lambda \delta u(x_0,y)^-}{|y|^{n+1}} \dd y \\
\intertext{We estimate $\delta u(x_0,y)$ by above by $-\delta b(x_0,y)$ except at those points where $x_0+y$ is in the \emph{good} set $G := \{u \leq 0\} \cap B_1$ where we use that $\delta u + m \, \delta b \leq -1/2$}
&\leq -m(t_0) M^- b(x_0,t_0) + \int_{G \setminus B_r} \frac{ \Lambda \delta (u + mb) (x_0,y)^+ - \lambda \delta (u + mb) (x_0,y)^-}{|y|^{n+1}} \dd y \\
&\leq -m(t_0) M^- b(x_0,t_0) + \int_{G \setminus B_r} \frac{- \lambda/2}{|y|^{n+1}} \dd y \\
&\leq -m(t_0) M^- b(x_0,t_0) - c_0 |G \setminus B_r|
\end{align*}
for some universal constant $c_0$ (this is how $c_0$ is chosen in \eqref{e:ODEform}). Note that as $r \to 0$, the measure of the set $|G \setminus B_r|$ becomes arbitrarily close to $|G| = \{ x \in B_1 : u(x,t_0) \leq 0\}$.

We consider two cases and obtain a contradiction in both. Either $b(x_0,t_0) > \beta_1$ or $b(x_0,t_0) \leq \beta_1$.

Let us start with the latter. If $b(x_0,t_0) \leq \beta_1$, then $M^- b(x_0,t_0) \geq 0$, then
\begin{equation} \label{e:bound-case1}
M^+ v(x_0,t_0) \leq - c_0 |G \setminus B_r|
\end{equation}

Replacing \eqref{e:bound-for-vt}, \eqref{e:bound-for-grad} and \eqref{e:bound-case1} into \eqref{e:equation-for-v} we obtain
\[ \eps_0 \geq -m'(t_0) b(x_0,t_0) + \eps_0 + c_0 |G \setminus B_r| \]
but for any $C_1>0$ this will be a contradiction with \eqref{e:ODEform} by taking $r$ small enough.

Let us now analyze the case $b(x_0,t_0) > \beta_1$. Since $b$ is a smooth, compactly supported function, there is some constant $C$ (depending on $A$), such that $|M^- b| \leq C$.
Then we have the bound
\[
M^+ v(x_0,t_0) \leq -m(t_0) M^- b(x_0,t_0) + c_0 |G \setminus B_r| \leq -C m(t_0) + c_0 |G \setminus B_r|
\]
Therefore, replacing in \eqref{e:equation-for-v}, we obtain
\[ \eps_0 \geq -m'(t_0) b(x_0,t_0) + \eps_0 + c_0 |G \setminus B_r| - C m(t_0)\]

and we have
\begin{equation*} 
- m'(t_0) b(x_0,t_0) - C m(t_0) + c_0 |G \setminus B_r| \leq 0.
\end{equation*}

We replace the value of $m'(t_0)$ in the above inequality using
\eqref{e:ODEform} and let $r \to 0$ to obtain
\[  (C_1 b(x_0,t_0)- C) m(t_0) + c_0 (1- b(x_0,t_0)) |G| \leq 0. \]
Recalling that $b(x_0,t_0) \geq \beta_1$, we arrive at a contradiction if $C_1$ is
chosen large enough.
\end{proof}

In the next lemma we use the notation $\Q_r$ to denote the cylinder
\[ \Q_r := B_r \times [-r,0]. \]
We prefer to use this notation instead of $C_r$ because we reserve the letter $C$ for constants.

\begin{lemma}[diminish of oscillation] \label{l:diminish-of-oscillation}
Let $u$ be a bounded continuous function which satisfies the following two inequalities in the viscosity sense in $\Q_1$
\begin{align}
u_t - A |\grad u| - M^+ u &\leq 0 \\
u_t + A |\grad u| - M^- u &\geq 0
\end{align}
There are universal constants $\theta>0$ and $\alpha>0$ (depending only on $A$, the dimension $n$, and the ellipticity constants $\lambda$ and $\Lambda$) such that if
\begin{align*}
|u| &\leq 1 &&\text{in } \Q_1 := B_1 \times [-1,0] \\
|u(x,t)| &\leq 2|(4+4A) x|^\alpha - 1  &&\text{in } (\R^n \setminus B_1) \times [-1,0]
\end{align*}
then
\[ \osc_{\Q_{1/(4+4A)}} u \leq (1-\theta) \]
\end{lemma}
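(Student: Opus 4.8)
The plan is to deduce the diminish-of-oscillation lemma from the pointwise estimate Lemma~\ref{l:point-estimate}, after a routine dichotomy. The key observation is that since $|u| \leq 1$ in $\Q_1$, at every time slice $t \in [-1, -1/2]$ (say) at least one of the sets $\{u \leq 0\} \cap B_1$ or $\{u \geq 0\} \cap B_1$ has measure at least $|B_1|/2$. Consequently, integrating over $t \in [-1/2, 0]$ — wait, more carefully, over the slab $B_1 \times [-1, -1/2]$ — one of the two sets $\{u \leq 0\}$ or $\{u \geq 0\}$ has measure at least $\mu := |B_1|/4$ in that slab. Without loss of generality (replacing $u$ by $-u$ and swapping $M^+ \leftrightarrow M^-$, $A \to A$), assume $\abs{\{u \leq 0\} \cap (B_1 \times [-1,-1/2])} \geq \mu$.

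The next step is to arrange the geometric hypotheses so Lemma~\ref{l:point-estimate} applies. Lemma~\ref{l:point-estimate} is stated on the cylinder $B_{2+2A} \times [-2,0]$ with the tail bound $u \leq 1$ on all of $\R^n \times [-2,0]$, whereas here I only have $|u| \leq 1$ on $\Q_1$ and the polynomial growth bound $|u(x,t)| \leq 2|(4+4A)x|^\alpha - 1$ outside $B_1$. So I would first rescale: set $\tilde u(x,t) = u(x/(2+2A), t/(2+2A))$ — or whatever scaling carries $\Q_1$ to a cylinder containing $B_{2+2A} \times [-2,0]$; since the equations $u_t - A|\grad u| - M^+u \leq 0$ and its $M^-$ counterpart are invariant under the parabolic scaling $x \mapsto rx$, $t\mapsto rt$ appropriate to an order-one operator (the diffusion and the drift both scale the same way, which is exactly why $s=1/2$ is critical), the rescaled function solves the same inequalities with the same $A$. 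The tail hypothesis $u \leq 1$ in Lemma~\ref{l:point-estimate} then needs to be checked on the rescaled domain; this is where the polynomial bound $2|(4+4A)x|^\alpha - 1$ enters — for $|x|$ up to the relevant radius this is still $\leq 1$ provided $\alpha$ is small, and for larger $|x|$ the growth is slow enough that, when computing $M^+ \tilde u$ at a point where a test function touches from above inside $B_1$ (rescaled), the error contributed by the tail is controlled by a term that we can absorb into the right-hand side $\eps_0$. This is the standard ``the tail only enters through the far field'' argument: the kernel $|y|^{-n-1}$ integrated against a function growing like $|y|^\alpha$ at infinity gives a finite, small contribution if $\alpha < 1$, and choosing $\alpha$ small makes it $\leq \eps_0$.

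With the rescaled $\tilde u$ satisfying $\tilde u \leq 1$ on $\R^n \times [-2,0]$ (after the tail correction), $\tilde u_t - A|\grad \tilde u| - M^+ \tilde u \leq \eps_0$ on $B_{2+2A}\times[-2,0]$, and $\abs{\{\tilde u \leq 0\} \cap (B_1 \times [-2,-1])} \geq \mu'$ for some universal $\mu' > 0$, Lemma~\ref{l:point-estimate} yields $\tilde u \leq 1 - \theta_0$ on $B_1 \times [-1,0]$ for universal $\theta_0 > 0$. Unscaling, $u \leq 1-\theta_0$ on a small cylinder of the form $\Q_{1/(4+4A)}$ (the constant $4+4A$ rather than $2+2A$ just gives a bit of room for the rescaling bookkeeping and matches the hypothesis domain). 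In the opposite case of the dichotomy, applying the same reasoning to $-u$ gives $u \geq -1 + \theta_0$ on the same small cylinder. Either way $\osc_{\Q_{1/(4+4A)}} u \leq 2 - \theta_0 = 1 - \theta$ with $\theta := \theta_0 - 1$... \emph{correction:} since $\osc u \leq 1 - (-1) = 2$ trivially and we gain $\theta_0$ on one side, $\osc_{\Q_{1/(4+4A)}} u \leq 2 - \theta_0$; to match the statement $\leq 1-\theta$ we should normalize so that $2-\theta_0 = 1-\theta$, i.e. the lemma as phrased must have $u$ normalized with $\osc \leq 1$ rather than $|u| \leq 1$ — in any case a harmless renormalization, and I would carry it out at the start by noting $\osc_{\Q_1} u \leq 2$ and working with $u/2$.

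The main obstacle I expect is the tail estimate: making precise how the polynomial-growth hypothesis outside $B_1$, after rescaling, translates into a genuine $\eps_0$-size perturbation of the equation that Lemma~\ref{l:point-estimate} can absorb. The subtlety is that $M^+$ is nonlocal, so when one freezes a test function $\varphi$ touching $\tilde u$ from above at an interior point $x_0$ and forms the replacement function $v$ (equal to $\varphi$ near $x_0$, equal to $\tilde u$ elsewhere), the value $M^+ v(x_0)$ sees $\tilde u$ on the polynomially-growing tail; one must verify $\int_{|y| > \rho} \frac{\tilde u(x_0+y) - \tilde u(x_0)}{|y|^{n+1}}\,\dd y$ is bounded by $C\alpha^{-1}$ times something small, and hence $\leq \eps_0$ for $\alpha$ small. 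This is a routine but slightly delicate computation; everything else — the measure dichotomy, the sign flip, the parabolic rescaling invariance of the inequalities — is bookkeeping.
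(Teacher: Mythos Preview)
Your proposal is essentially the paper's own argument: rescale (the paper uses $\tilde u(x,t)=u((4+4A)x,(4+4A)t)$, which explains the factor $4+4A$), run the measure dichotomy on $\tilde u$ over $B_1\times[-2,-1]$, absorb the polynomial tail into an $\eps_0$-size right-hand side, and invoke Lemma~\ref{l:point-estimate}. One sharpening worth noting: rather than arguing vaguely that ``after the tail correction $\tilde u\leq 1$,'' the paper simply sets $v=\min(1,\tilde u)$, observes $v=\tilde u$ inside $\Q_{4+4A}$, and computes that for any test function touching from above in $\Q_{2+2A}$ the only discrepancy between $M^+$ of the two test-replacement functions is
\[
\int_{x+y\notin B_{4+4A}} \Lambda(\tilde u(x+y)-1)^+\,\frac{\dd y}{|y|^{n+1}}\ \leq\ \eps_0
\]
for $\alpha$ small; this truncation makes the hypothesis $v\leq 1$ of Lemma~\ref{l:point-estimate} hold exactly. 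Your normalization worry is legitimate but harmless: Lemma~\ref{l:point-estimate} gives $\tilde u\leq 1-\theta_0$ on $B_1\times[-1,0]$, hence $\osc\leq 2-\theta_0$ after using $|\tilde u|\leq 1$ from below, and one relabels $\theta$ (the paper is loose about this factor of $2$, as it is in Theorem~\ref{t:adv-diffusion}).
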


\begin{proof}
We consider the rescaled version of $u$:
\[ \tilde u(x,t) = u((4+4A) x, (4+4A) t). \]

The function $\tilde u$ will stay either positive of negative in half of the points in $B_1 \times [-2,-1]$ (in measure). More precisely, either $\{\tilde  u \leq 0 \} \cap (B_1 \times [-2,-1]) \geq |B_1|/2$ or $\{\tilde u \geq 0 \} \cap (B_1 \times [-2,-1]) \geq |B_1|/2$. Let us assume the former, otherwise we can repeat the proof with $-\tilde u$ instead of $\tilde u$.

We will conclude the proof as soon as we can apply Lemma \ref{l:point-estimate} to $\tilde u$. The only hypothesis we are missing is that $\tilde u$ is not bounded above by $1$. So we have to consider $v = \min(1,\tilde u)$ and estimate the error in the right hand side of the equation. We prove that if $\alpha$ is small enough, then $v$ satisfies
\[ v_t - A |\grad v| - M^+ v \leq \eps \]
for a small $\eps$ and we can apply Lemma \ref{l:point-estimate}.

Note that inside $\Q_{4+4A}$, $\tilde u \leq 1$, thus $v = \tilde u$. The error in the equation in $\Q_{2+2A}$ comes only from the tails of the integrals in the computation of $M^+ v$. Indeed, if $\varphi$ touches $v$ from above at a point $(x,t) \in \Q_{2+2A}$, then it also touches $\tilde u$ at the same point. Choosing a small neighborhood $U$ of $x$ and constructing
\begin{multicols}{2}
\[ w_1 = \begin{cases}
\varphi & \text{in } U \\
\tilde u & \text{outside } U
\end{cases} \]

\[ w_2 = \begin{cases}
\varphi & \text{in } U \\
v & \text{outside } U
\end{cases} \]
\end{multicols}
We see that $\partial_t w_1 - A |\grad w_1| - M^+ w_1 \leq 0$ at $(x,t)$. On the other hand $\partial_t w_1 - A |\grad w_1| = \partial_t w_2 - A |\grad w_2|$. So we estimate $M^+ w_1 - M^+ w_2$.
\begin{align*}
M^+ w_1(x,t) - M^+ w_2(x,t) 
&= \int_{x+y \notin B_{4+4A}} \Lambda (\tilde u(x+y) - 1)^+ \frac{\dd y}{|y|^{n+1}} \\
&\leq \int_{x+y \notin B_{4+4A}} \Lambda (2((4+4A)^2 |x+y|)^\alpha - 2)^+ \frac{\dd y}{|y|^{n+1}} \leq \eps_0
\end{align*}
if $\alpha$ is chosen small enough.

Thus, for any test function $\varphi$, $\partial_t w_2 - A |\grad w_2| - M^+ w_2 \leq \eps_0$ and $v$ satisfies that inequality in the viscosity sense. We can then apply Lemma \ref{l:point-estimate} to $v$ and conclude the proof.
\end{proof}

\section{$C^{1,\alpha}$ regularity}
\label{s:c1a-regularity}
\begin{thm} [H\"older continuity for advection-diffusion equations] \label{t:adv-diffusion}
Let $u$ be a bounded continuous function which satisfies the following two inequalities in the viscosity sense in $\R^n \times [0,t]$
\begin{align}
u_t - A |\grad u| - M^+ u &\leq 0 \label{e:ineq-1}\\
u_t + A |\grad u| - M^- u &\geq 0 \label{e:ineq-2}
\end{align}
There there is an $\alpha>0$ (depending only on $A$, the dimension $n$, and the ellipticity constants $\lambda$ and $\Lambda$) such that for every $t>0$ the function $u$ is $C^\alpha$. Moreover we have the estimate
\begin{equation} \label{e:calpha}
|u(x,t) - u(y,s)| \leq C \norm{u(-,0)}_{L^\infty} \frac{|x-y|^\alpha + |t-s|^\alpha}{t^\alpha} \qquad \text{for every $x,y \in \R^n$ and $0 \leq s \leq t$}
\end{equation}

Equivalently we can write the estimate as,
\[ \norm{u}_{C^\alpha(\R^n \times [t/2,t])} \leq \frac C {t^\alpha} \norm{u(-,0)}_{L^\infty} \]
\end{thm}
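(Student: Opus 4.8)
The plan is to upgrade the one-step gain of Lemma~\ref{l:diminish-of-oscillation} into a power-law decay of the oscillation of $u$ on cylinders shrinking to an arbitrary point, and then read off \eqref{e:calpha} from that decay. First, some preliminary reductions. The inequalities \eqref{e:ineq-1}--\eqref{e:ineq-2} are preserved if $u$ is multiplied by a positive constant, so I may assume $\norm{u(-,0)}_{L^\infty}=1$; applying Lemma~\ref{l:maximum-principle} to $u$ and to $-u$ then gives $|u|\le1$ for all times. Fix a point $(x_0,t_0)$ with $t_0>0$ (and $t_0$ at most the final time; there is nothing to prove above that scale). The cylinder $B_{t_0}(x_0)\times[0,t_0]$ lies in the domain of the equation, and the rescaling $(x,t)\mapsto(x_0+t_0x,\,t_0+t_0t)$ preserves both inequalities, since $\partial_t$, $\grad$ and the operators $M^{\pm}$ are all homogeneous of order one --- the computation being of the same kind as in the proof of Proposition~\ref{p:classical}. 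We may therefore assume $t_0$ is at unit scale, i.e.\ $|u|\le1$ globally and $u$ solves \eqref{e:ineq-1}--\eqref{e:ineq-2} on $\Q_1$; reinstating this rescaling at the end is exactly what produces the factor $t^{-\alpha}$ in \eqref{e:calpha}.

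The iteration. Writing $\rho:=1/(4+4A)$, I would build inductively constants $c_k$ and renormalizations
\[
v_k(x,t):=(1-\theta)^{-k}\bigl(u(\rho^{k}x,\rho^{k}t)-c_k\bigr),
\]
and show that each $v_k$ satisfies the hypotheses of Lemma~\ref{l:diminish-of-oscillation}: (i) the inequalities \eqref{e:ineq-1}--\eqref{e:ineq-2} on $\Q_1$, inherited through the rescaling and through $M^{\pm}(v-c)=M^{\pm}v$; (ii) $|v_k|\le1$ on $\Q_1$; (iii) $|v_k(x,t)|\le 2|(4+4A)x|^{\alpha}-1$ on $(\R^n\setminus B_1)\times[-1,0]$. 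Granting this, Lemma~\ref{l:diminish-of-oscillation} applied to $v_k$ --- or to $-v_k$, which only interchanges the two inequalities --- gives $\osc_{\Q_\rho}v_k\le1-\theta$, and recentering ($c_{k+1}$ chosen as the midrange of $u$ on $\Q_{\rho^{k+1}}$) produces the data at step $k+1$ with $\osc_{\Q_{\rho^{k+1}}}u\le(1-\theta)^{k+1}$. Iterating and undoing the first rescaling yields $\osc_{B_r(x_0)\times[t_0-r,t_0]}u\le C\,(r/t_0)^{\alpha}$ for all $0<r\le t_0$, for a small exponent $\alpha>0$ depending only on $A,n,\lambda,\Lambda$.

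Conclusion, and the main difficulty. Since the cylinders $B_r(x_0)\times[t_0-r,t_0]$ contract isotropically in $(x,t)$, the decay just obtained is a joint space-time modulus of continuity, and a routine chaining argument between two points $(x,t)$, $(y,s)$ with $0\le s\le t$ --- falling back on $|u|\le1$ when $|t-s|$ is comparable to $t$ --- converts it into \eqref{e:calpha}, with the factors $\norm{u(-,0)}_{L^\infty}$ and $t^{-\alpha}$ the two normalizations put back. The step I expect to be the real work is the inductive verification of (iii): after the rescaling by $\rho$, a point with $|x|$ just above $1$ corresponds, before rescaling, to a point just inside $B_\rho$, where the inductive control on $v_k$ is only $|v_k|\le1$ rather than the smaller value the polynomial profile would demand, and dividing by $1-\theta$ to renormalize then produces --- at first sight --- a value overshooting $2(4+4A)^{\alpha}-1\approx1$. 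Closing the induction therefore forces a delicate balance of the parameters: one takes $\theta$ no larger than the constant of Lemma~\ref{l:diminish-of-oscillation} (shrinking it if needed), chooses the recentering constants so that their increments are only $O(\theta(1-\theta)^k)$ --- possible because the oscillation bound pins the relevant values into an interval of length $1-\theta$ inside $[-1,1]$ --- keeps $|v_k|$ strictly below $1$ on $\Q_1$, and takes $\alpha$ small, at most the threshold $\alpha_0$ for which Lemma~\ref{l:diminish-of-oscillation} holds and linked to $\theta$ by a compatibility relation, so that the specific profile $2|(4+4A)x|^{\alpha}-1$, rescaled by $\rho$ and amplified by $(1-\theta)^{-1}$, still fits under itself both on the annulus $1\le|x|\le1/\rho$ and on its polynomially growing part $|x|\ge1/\rho$. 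Everything else is the scaling bookkeeping already described.
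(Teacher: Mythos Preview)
Your approach is correct and is essentially the same iteration as the paper's: normalize, rescale to unit scale, and feed Lemma~\ref{l:diminish-of-oscillation} to itself with the scale factor $\rho=1/(4+4A)$ and $\alpha$ chosen so that $1-\theta\le\rho^\alpha$. The step you flag as the ``main difficulty'' --- propagating the tail profile (iii) through the induction --- is handled in the paper without any delicate balancing by tracking \emph{monotone nested} intervals $a_k\le u\le b_k$ on $\Q_{\rho^k}$ (with $b_k-a_k=2\rho^{\alpha k}$, $a_k\uparrow$, $b_k\downarrow$) rather than centers $c_k$: the nesting forces the midpoint $(a_k+b_k)/2$ to lie at distance at least $\rho^{\alpha k}$ from both endpoints of every earlier interval $[a_{k-j},b_{k-j}]$, which after rescaling is exactly the ``$-1$'' in $2|\rho^{-1}x|^\alpha-1$ on each dyadic annulus.
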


\begin{proof}
For any $(x_0,t_0)$, we consider the normalized function
\[ v(x,t) = \frac 1 {\norm{u}_{L^\infty}} \, u(x_0+t_0 x , t_0 ( t+1) ). \]

We prove the $C^\alpha$ estimate \eqref{e:calpha} at every point $(x_0,t_0)$ by proving a $C^\alpha$ estimate for $v$ at $(0,0)$. Note that since the $L^\infty$ norm of $u$ is non increasing in time, then $|v|\leq 1$. Moreover, $v$ is also a solution of the same equation \eqref{e:ineq-1} and \eqref{e:ineq-2}.

Let $r=1/(4+4A)$. The estimate follows as soon as we can prove
\begin{equation} \label{e:oscillation-in-cylinders}
\osc_{\Q_{r^k}} v \leq 2 r^{\alpha k} \qquad \text{for } k=0,1,2,3,\dots
\end{equation}

We will prove \eqref{e:oscillation-in-cylinders} by constructing two sequences $a_k$ and $b_k$ such that $a_k \leq  v \leq b_k$ in $\Q_{r^k}$, $b_k - a_k = 2 r^{\alpha k}$, $a_k$ is nondecreasing and $b_k$ is non increasing. We will construct the sequence inductively.

Since $|v| \leq 1$ everywhere, we can start by choosing some $a_0 \leq \inf v$ and $b_0 \geq \sup v$ so that $b_0 - a_0 = 2$. Assume we have constructed the sequences up to some value of $k$ and let us find $a_{k+1}$ and $b_{k+1}$.

We scale again by considering
\[ w(x,t) = (v(r^k x,r^k t) - (a_k+b_k)/2) r^{-\alpha k}. \]
Therefore we have 
\begin{align*}
|w| &\leq 1 && \text{ in } \Q_1 \\
|w| &\leq 2 r^{-\alpha k} - 1 &&  \text{in } \Q_{r^{-k}}  \qquad \text{therefore } |w(x,t)| \leq 2 |r^{-1} x|^\alpha - 1 \text{ where } |x|>1.
\end{align*}

If $\alpha$ is small enough, we can apply Lemma \ref{l:diminish-of-oscillation} to obtain $\osc_{\Q_r} \leq 1-\theta$. So, if $\alpha$ is chosen smaller than the $\alpha$ of Lemma \eqref{l:diminish-of-oscillation} and also so that $1-\theta \leq r^\alpha$, we have $\osc_{\Q_r} \leq r^\alpha$, which means $\osc_{\Q_{r^{k+1}}} \leq r^{\alpha(k+1)}$ so we can find $a_{k+1}$ and $b_{k+1}$ and we finish the proof.
\end{proof}

The following theorem is actually a corollary of Theorem \ref{t:adv-diffusion}, but we state it as a theorem since it is the main result of this paper.

\begin{thm} [$C^{1,\alpha}$ regularity for HJ with critical diffusion] \label{t:hj}
Let $u$ be a bounded continuous function which solves \eqref{e:hj-general} in $\R^n \times [0,t]$. Assume the initial data $u(-,0)$ is Lipschitz continuous. There is an $\alpha>0$ (depending only on $A$, the dimension $n$, and the ellipticity constants $\lambda$ and $\Lambda$) such that for every $t>0$ the function $u$ is $C^{1,\alpha}$ in both $x$ and $t$. Moreover
\[ \norm{u}_{C^{1,\alpha}(\R^n \times [t/2,t])} \leq \frac C {t^\alpha} \norm{u(-,0)}_{Lip} \]
\end{thm}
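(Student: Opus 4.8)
The plan is to bootstrap from the $C^\alpha$ regularity of Theorem~\ref{t:adv-diffusion} to the $C^{1,\alpha}$ regularity of $u$ by differentiating the equation. First I would note that since $u(-,0)$ is Lipschitz, Lemma~\ref{l:lipschitz} guarantees that $u(-,t)$ stays Lipschitz with the same constant $A_0 := \norm{u_0}_{Lip}$ for all $t \geq 0$, so $\grad u$ is bounded and the Hamiltonian $H$ (or the family $b^{ij}$) only ever needs to be evaluated on the ball $\{|p| \leq A_0\}$; on that ball $H$ is Lipschitz, which puts us in the setting of \eqref{e:hj-general} with $A = \norm{H}_{Lip(\{|p|\le A_0\})}$. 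The next step is to obtain a $C^\alpha$ estimate for the first derivatives $u_e$ in space. The clean way to do this is to consider, for a fixed direction $e$ and small $h>0$, the incremental quotient $v^h(x,t) = (u(x+he,t) - u(x,t))/h$. By the same comparison argument used in Lemma~\ref{l:S-class-for-difference} (applied to the subsolution $u(\cdot+he,\cdot)$ and supersolution $u$, and vice versa), $v^h$ is a viscosity subsolution of $v_t - A|\grad v| - M^+ v \leq 0$ and a supersolution of $v_t + A|\grad v| - M^- v \geq 0$ in $\R^n \times [0,t]$; it is bounded by $A_0$ uniformly in $h$. Theorem~\ref{t:adv-diffusion} then gives $\norm{v^h}_{C^\alpha(\R^n \times [t/2,t])} \leq C t^{-\alpha} A_0$ with a constant independent of $h$. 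Letting $h \to 0$ (and doing this for all coordinate directions) yields $\grad u(-,t) \in C^\alpha$ with $\norm{\grad u}_{C^\alpha(\R^n \times [t/2,t])} \leq C t^{-\alpha}\norm{u_0}_{Lip}$, which is the spatial part of the claimed estimate.

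With $\grad u \in C^\alpha$ in hand, I would recover the regularity in $t$ directly from the equation. By Proposition~\ref{p:classical}, since $u(-,t) \in C^{1,\alpha}$ uniformly for $t$ in compact subsets of $(0,\infty)$, the nonlinear operator $\I u(-,t)$ is a $C^\alpha$ function of $x$ with $C^\alpha$ norm controlled by $\norm{u(-,t)}_{C^{1,\alpha}}$; and since $u$ is a classical-in-space solution on $(0,\infty)$ (this is the point of Proposition~\ref{p:classical}), the equation $u_t = \I u$ holds pointwise, so $u_t(-,t) \in C^\alpha$ in $x$ with $\norm{u_t(-,t)}_{C^\alpha} \leq C t^{-\alpha}\norm{u_0}_{Lip}$. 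To upgrade this to joint $C^{1,\alpha}$ regularity in $(x,t)$ — i.e.\ Hölder continuity of $u_t$ and of $\grad u$ in the time variable as well — I would use the equation again: $\grad u$ is bounded and $\I u$ is bounded, so $u_t$ is bounded, giving Lipschitz (hence Hölder) dependence of $u$ on $t$; and for the Hölder-in-time continuity of $\grad u$ one can either apply Theorem~\ref{t:adv-diffusion}'s full space-time $C^\alpha$ estimate \eqref{e:calpha} to the incremental quotients $v^h$ (which already gives $\grad u \in C^\alpha$ jointly in $(x,t)$ after passing to the limit), or estimate $\grad u(x,t) - \grad u(x,s)$ by combining the bound on $u_t$ (in $L^\infty$, hence $\grad u$ changes slowly in time in an averaged sense) with the uniform spatial $C^\alpha$ bound via a standard interpolation. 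Finally, the time regularity of $u_t$ itself follows by differentiating $u_t = \I u$ in time once more is not needed — one only needs $u_t \in C^\alpha$, and its Hölder continuity in $t$ comes from writing $u_t(x,t) - u_t(x,s) = \I u(x,t) - \I u(x,s)$ and using that $\I$ is a bounded-in-$C^\alpha$ operator together with the already-established joint $C^\alpha$ regularity of $\grad u$.

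The main obstacle I anticipate is the step justifying that the incremental quotients $v^h$ genuinely satisfy the two differential inequalities in the viscosity sense \emph{uniformly in $h$}, and that passing to the limit $h\to 0$ is legitimate. The inequalities for $v^h$ are a translation-invariance argument layered on top of Lemma~\ref{l:S-class-for-difference}: $u(\cdot + he, \cdot)$ solves \eqref{e:hj-general} because the equation has constant coefficients, so $\tfrac{1}{h}(u(\cdot+he,\cdot) - u)$ is, up to the harmless scaling by $1/h$, exactly a "difference of a subsolution and a supersolution", and the maximal operators $M^{\pm}$ are positively homogeneous of degree one so the rescaling by $1/h$ is compatible with the inequalities. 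The limit $h\to 0$ then uses that the $C^\alpha$ bound from Theorem~\ref{t:adv-diffusion} is uniform in $h$, giving equicontinuity and hence (by Arzel\`a--Ascoli on compact sets plus a diagonal argument) locally uniform convergence of a subsequence of $v^h$ to $u_e = \partial_e u$, with the $C^\alpha$ bound preserved in the limit; since the limit is independent of the subsequence, the full family converges. A secondary technical point is checking that $u$ is genuinely a classical (pointwise) solution on $\{t>0\}$ so that $u_t = \I u$ can be used as a pointwise identity rather than merely in the viscosity sense — but this is precisely what Proposition~\ref{p:classical} together with the $C^{1,\alpha}$-in-space regularity delivers (a $C^{1,\alpha}$ function at which the equation holds in the viscosity sense holds it classically, since $\I u$ is then continuous and one can touch with $C^2$ functions from both sides).
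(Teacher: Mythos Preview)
Your spatial argument is exactly the paper's: apply Theorem~\ref{t:adv-diffusion} to the difference quotients $v^h=(u(\cdot+he,\cdot)-u)/h$, which are bounded by $\norm{u_0}_{Lip}$ and satisfy the two extremal inequalities by Lemma~\ref{l:S-class-for-difference} and translation invariance; the uniform $C^\alpha$ bound on $v^h$ in space--time then passes to the limit and gives $\grad u\in C^\alpha(\R^n\times[t/2,t])$. Your discussion of the obstacles here (positive homogeneity of $M^\pm$, equicontinuity, Arzel\`a--Ascoli) is correct and more careful than the paper, which simply asserts this step.

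For the time regularity of $u_t$ you diverge from the paper. The paper does \emph{not} argue through the pointwise identity $u_t=\I u$ and Proposition~\ref{p:classical}; instead it repeats the same trick: once $u$ is $C^{1,\alpha}$ in $x$, $\I u$ is bounded (Proposition~\ref{p:classical}), hence $u_t$ is bounded, hence the \emph{time} increment $w_h(x,t)=(u(x,t+h)-u(x,t))/h$ is bounded uniformly in $h$ and again satisfies the hypotheses of Theorem~\ref{t:adv-diffusion}, giving $u_t\in C^\alpha$ jointly in $(x,t)$ with the same exponent. This is shorter and avoids the issue you run into at the end: estimating $\I u(x,t)-\I u(x,s)$ from the joint $C^\alpha$ regularity of $\grad u$ requires an interpolation between $\norm{u(\cdot,t)-u(\cdot,s)}_{L^\infty}\lesssim|t-s|$ and $[\grad(u(\cdot,t)-u(\cdot,s))]_{C^\alpha}\lesssim 1$, which produces a slightly smaller H\"older exponent (harmless for the statement, but an extra step). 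Your route is valid, just less streamlined; the paper's has the virtue of using a single tool twice rather than switching to a pointwise computation with the nonlinear operator.
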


\begin{proof}
The proof follows by applying Theorem \ref{t:adv-diffusion} to incremental quotients of $u$. We start by proving the regularity in the space variable $x$. 

For any vector $e$, the incremental quotient
\[ v_e(x,t) = \frac{u(x+e,t) - u(x,t)}{|e|} \]
is bounded in $L^\infty$ by $\norm{u}_{Lip}$ and satisfies the hypothesis of Theorem \ref{t:adv-diffusion}. Therefore
\[ \norm{v_e}_{C^\alpha (\R^n \times [t/2,t])} \leq \frac C {t^\alpha} \norm{u(-,0)}_{Lip}. \]
uniformly in $e$. Thus $\grad_x u$ is $C^{\alpha}$ with an estimate
\[ \norm{\grad_x u}_{C^\alpha(\R^n \times [t/2,t])} \leq \frac C {t^\alpha} \norm{u(-,0)}_{Lip}. \]

Since $u$ is $C^{1,\alpha}$ in space, the operator right hand side in the equation \eqref{e:hj-general}, $Iu$ is bounded (and H\"older continuous). Therefore $u_t$ is bounded. Therefore, we can consider an incremental quotient in time
\[ w_h(x,t) = \frac{u(x,t+h) - u(x,t)} h, \]
and $w_h$ will be bounded in $L^\infty$ independently of $h$. Moreover, $w_h$ satisfies the hypothesis of Theorem \ref{t:adv-diffusion}, then $w_h$ is $C^\alpha$ independently of $h$, which implies that $u_t$ is H\"older continuous as well with an estimate
\[ \norm{u_t}_{C^\alpha(\R^n \times [t/2,t])} \leq \frac C {t^\alpha} \norm{u(-,0)}_{Lip}, \]
which finishes the proof.
\end{proof}

\section{Non smooth initial data}
\label{s:rough-initial-data}

We have proved in Theorem \ref{t:hj} that if the initial data $u(-,0)=u_0$ is Lipschitz, then the solutions $u$ immediately becomes $C^{1,\alpha}$ for $t>0$. In this section we will show that the Lipschitz condition on $u_0$ is not necessary.

For equation \eqref{e:hj-fract} the situation is somewhat different. If $H$ is globally Lipschitz, then it is a particular case of \eqref{e:hj-general} and the following theorem applies. If $H$ is only locally Lipschitz, then we must first show that $u$ is Lipschitz in order to apply our theorems for \eqref{e:hj-general}, and therefore the following result is not relevant.

\begin{thm} [$C^{1,\alpha}$ regularity for non Lipschitz initial data] \label{t:rough-initial-data}
Let $u$ be a bounded continuous function which solves \eqref{e:hj-general} in $\R^n \times [0,t]$. There is an $\alpha>0$ (depending only on $A$, the dimension $n$, and the ellipticity constants $\lambda$ and $\Lambda$) such that for every $t>0$ the function $u$ is $C^{1,\alpha}$ in both $x$ and $t$. Moreover
\[ \norm{u}_{C^{1,\alpha}(\R^n \times [t/2,t])} \leq \frac C {t^{1+\alpha}} \norm{u(-,0)}_{L^\infty} \]
\end{thm}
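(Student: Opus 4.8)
The plan is to run the argument of Theorem~\ref{t:hj}, but --- the initial data being only bounded --- to first manufacture spatial H\"older regularity out of nothing and then boost the H\"older exponent up past $1$ by a finite iteration, so that $u$ is already $C^{1,\alpha}$ in space before the equation is differentiated. Write $K:=\norm{u(-,0)}_{L^\infty}$. All estimates below are for the relevant seminorms, as in Theorem~\ref{t:adv-diffusion}; in particular the constants $c^{ij}$ merely shift $u$ by a term affine in $t$ and influence none of these seminorms, so I ignore them, and by the parabolic scaling of \eqref{e:hj-general} I may assume $t\le 1$.

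The engine is the family of spatial increments $w_h(x,\tau):=u(x+h,\tau)-u(x,\tau)$, $h\in\R^n$. Since the translate $(x,\tau)\mapsto u(x+h,\tau)$ is again a solution of \eqref{e:hj-general}, Lemma~\ref{l:S-class-for-difference} shows that $w_h$ is a bounded continuous function that is a viscosity subsolution of \eqref{e:equation-for-difference} and a supersolution of its $M^-$-counterpart, with no right-hand side (the $c^{ij}$ cancel in the difference); thus $w_h$ satisfies the hypotheses of Theorem~\ref{t:adv-diffusion}, and $\norm{w_h(-,\tau)}_{L^\infty}\le 2K$ for all $\tau$. Two elementary facts link $w_h$ to $u$: (i) if $[u(-,s)]_{C^\beta}\le M$ then $\norm{w_h(-,s)}_{L^\infty}\le M\,\abs{h}^{\beta}$ for every $h$; and (ii) evaluating a $C^\alpha$ bound for the function $w_h(-,\tau)$ at the points $x$ and $x-h$ gives
\[ \abs{u(x+h,\tau)+u(x-h,\tau)-2u(x,\tau)}\le [w_h(-,\tau)]_{C^\alpha}\,\abs{h}^{\alpha}, \]
and, by the classical characterisation of H\"older classes through second differences (together with $\norm{u(-,\tau)}_{L^\infty}\lesssim K$), a bound of the form $\lesssim L\,\abs{h}^{\gamma}$ on the left-hand side yields $[u(-,\tau)]_{C^{\gamma}}\lesssim L+K$ when $\gamma<1$, and $\norm{u(-,\tau)}_{C^{1,\gamma-1}}\lesssim L+K$ when $1<\gamma<2$.

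Fix the exponent $\alpha>0$ of Theorem~\ref{t:adv-diffusion}, shrunk if necessary so that $1/\alpha\notin\mathbb Z$; put $N:=\lfloor1/\alpha\rfloor+1$, $\eta:=N\alpha-1\in(0,\alpha)$, $\delta:=t/(4N)$ and $s_j:=j\delta$. Starting from $\norm{w_h(-,0)}_{L^\infty}\le 2K$ (the case $j=0$, with exponent $\beta_0=0$ and $M_0=2K$), suppose $[u(-,s_j)]_{C^{j\alpha}}\le M_j$ with $j\alpha<1$. Then fact~(i) gives $\norm{w_h(-,s_j)}_{L^\infty}\le M_j\,\abs{h}^{j\alpha}$; Theorem~\ref{t:adv-diffusion}, applied to $w_h$ with initial time $s_j$, gives $[w_h(-,\tau)]_{C^\alpha}\le C(\tau-s_j)^{-\alpha}M_j\,\abs{h}^{j\alpha}\le C\delta^{-\alpha}M_j\,\abs{h}^{j\alpha}$ for all $\tau\ge s_{j+1}$; and fact~(ii) then upgrades this to a bound $M_{j+1}\le C\delta^{-\alpha}M_j$ for the $C^{(j+1)\alpha}$-regularity of $u(-,\tau)$, valid for all $\tau\ge s_{j+1}$ (the additive $K$ being absorbed because $\delta<1$ and $M_j\ge K$). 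Since $1/\alpha\notin\mathbb Z$ one has $(j+1)\alpha<1$ for $0\le j\le N-2$ while $N\alpha=1+\eta\in(1,2)$, so precisely the last pass lands in the second case of fact~(ii): after $N$ passes $u(-,\tau)$ is $C^{1,\eta}$ in space for every $\tau\ge t/4$, with
\[ \norm{u(-,\tau)}_{C^{1,\eta}}\ \lesssim\ M_N\ \le\ 2K\,C^{N}\delta^{-N\alpha}\ =\ 2K\,C^{N}(4N)^{1+\eta}\,t^{-(1+\eta)}\ =:\ C_\#\,K\,t^{-(1+\eta)}, \]
where $C_\#$ depends only on $A$, $n$, $\lambda$, $\Lambda$. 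With this uniform spatial $C^{1,\eta}$ bound in hand, $u_t=\I u$ is bounded and H\"older (Proposition~\ref{p:classical}), and the H\"older regularity of $u$ in $t$ follows exactly as in the second half of the proof of Theorem~\ref{t:hj}, by feeding the (now uniformly bounded) time increments $\frac1\sigma\bigl(u(x,\tau+\sigma)-u(x,\tau)\bigr)$ into Theorem~\ref{t:adv-diffusion}. Assembling the estimates on $\R^n\times[t/2,t]$, and taking the exponent named in the statement to be $\eta$, proves the theorem.

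The step I expect to be the genuine obstacle is the time bookkeeping in the iteration: each pass buys exactly $\alpha$ extra units of spatial regularity at the price of one factor $\delta^{-\alpha}$, so one has to choose $\alpha$ (within the latitude allowed by Theorem~\ref{t:adv-diffusion}) and the number of passes so that the exponent jumps cleanly from below $1$ into $(1,2)$ --- avoiding the Zygmund value $1$, where second differences no longer control the modulus of continuity linearly --- while the accumulated power of $t$ stays equal to $1+\eta<1+\alpha$, which is exactly the power asserted. The other ingredients (the cancellation of the $c^{ij}$ in $w_h$ and the second-difference characterisation of H\"older spaces) are routine.
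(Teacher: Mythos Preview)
Your proof is correct and follows the same strategy as the paper: iterate Theorem~\ref{t:adv-diffusion} on spatial increments of $u$ to boost the H\"older exponent step by step, then recover the time regularity exactly as in Theorem~\ref{t:hj}. The only substantive difference is in the bookkeeping of the upgrading step. The paper applies Theorem~\ref{t:adv-diffusion} to the \emph{normalized} increments $v_{\beta,h}=(u(\cdot+h)-u)/|h|^\beta$ and then invokes Lemma~\ref{l:telescopic-lemma} (quoted from \cite{cabre1995fully}) to pass from a uniform $C^\alpha$ bound on $v_{\beta,h}$ to $u\in C^{\alpha+\beta}$, stopping as soon as $u$ is Lipschitz and then handing off to Theorem~\ref{t:hj}. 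You instead keep the unnormalized increments $w_h$, evaluate their $C^\alpha$ bound at $x$ and $x-h$ to get a symmetric second-difference estimate, and appeal to the classical second-difference characterization of $C^\gamma$ for $\gamma\notin\mathbb Z$; this lets you overshoot Lipschitz and land directly in $C^{1,\eta}$. The two devices are essentially equivalent (indeed the proof of Lemma~\ref{l:telescopic-lemma} is a dyadic telescoping very close to your second-difference argument), and your version has the minor advantage that the final power $t^{-(1+\eta)}$ is tracked explicitly rather than left implicit. Your care in choosing $\alpha$ so that $1/\alpha\notin\mathbb Z$, to avoid the Zygmund endpoint, is exactly the right precaution.
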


The proof of Theorem \ref{t:rough-initial-data} uses the following Lemma from \cite{cabre1995fully} (section 5.3) in order to improve the regularity estimates on $u$ applying Theorem \ref{t:adv-diffusion} repeatedly.

\begin{lemma} \label{l:telescopic-lemma}
Let $0<\alpha<1$, $0 < \beta \leq 1$ and $K>0$ be constants. Let $u \in L^\infty(\R^n)$ satisfy $\norm{u}_{L^\infty} \leq K$. Define, for $h \in \R^n$, with $0 < |h| \leq 1$, the incremental quotient
\[ v_{\beta,h}(x) = \frac {u(x+h)-u(x)}{|h|^\beta} \qquad \text{for } x \in \R^n. \]
Assume that $v_{\beta,h} \in C^\alpha(\R^n)$ with $\norm{v_{\beta,h}}_{C^\alpha(\R^n)} \leq K$ for any $0 < |h| \leq 1$. We then have
\begin{enumerate}
\item If $\alpha+\beta < 1$ then $u \in C^{\alpha+\beta}(\R^n)$ and $\norm{u}_{C^{\alpha+\beta}} \leq C K$.
\item If $\alpha+\beta \geq 1$ then $u \in Lip(\R^n)$ and $\norm{u}_{Lip} \leq C K$.
\end{enumerate}
where the constant $C$ depends only on $\alpha$ and $\beta$.
\end{lemma}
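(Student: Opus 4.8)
The plan is to prove Lemma~\ref{l:telescopic-lemma} by the familiar telescoping‑sum argument for incremental quotients. It is enough to estimate $|u(x)-u(z)|$ when $|x-z|\le 1$, since for $|x-z|>1$ one has $|u(x)-u(z)|\le 2K$, which is $\le 2K|x-z|^{\alpha+\beta}$ (for part~1) and $\le 2K|x-z|$ (for part~2) because $|x-z|\ge1$. From the hypothesis $\norm{v_{\beta,h}}_{C^\alpha(\R^n)}\le K$ I would extract two facts, valid for every $0<|k|\le1$: the bound on the first difference $|u(x+k)-u(x)|\le K|k|^\beta$ (from $\norm{v_{\beta,k}}_{L^\infty}\le K$), and the bound on the second difference
\[
  \abs{u(x+2k)-2u(x+k)+u(x)} \;=\; |k|^\beta\,\abs{v_{\beta,k}(x+k)-v_{\beta,k}(x)}\;\le\; K\,|k|^{\alpha+\beta},
\]
obtained by applying the $C^\alpha$ seminorm bound at the two points $x$ and $x+k$, which are at distance $|k|$.

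The telescoping identity I would use is, for every $N\ge1$,
\[
  u(x+h)-u(x) \;=\; 2^{-N}\bigl(u(x+2^{N}h)-u(x)\bigr)\;-\;\sum_{j=0}^{N-1} 2^{-(j+1)}\bigl(u(x+2^{j+1}h)-2u(x+2^{j}h)+u(x)\bigr),
\]
which follows by an immediate induction on $N$ from $u(x+k)-u(x)=\tfrac12\bigl(u(x+2k)-u(x)\bigr)-\tfrac12\bigl(u(x+2k)-2u(x+k)+u(x)\bigr)$. Given $0<|h|\le1$ I would choose $N=N_0$ with $2^{N_0}\le |h|^{-1}<2^{N_0+1}$, so that $2^{j}|h|\le\tfrac12$ for $j\le N_0-1$ and $2^{-N_0}<2|h|$. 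Using the first‑difference bound on the leading term ($\le 2^{-N_0}K(2^{N_0}|h|)^\beta\le 2K|h|$) and the second‑difference bound on each summand gives
\[
  |u(x+h)-u(x)| \;\le\; 2K|h| \;+\; \tfrac{K}{2}\,|h|^{\alpha+\beta}\sum_{j=0}^{N_0-1} 2^{\,j(\alpha+\beta-1)}.
\]

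It then remains to sum the geometric series, and here the two cases split. If $\alpha+\beta<1$ the series is bounded by $(1-2^{\alpha+\beta-1})^{-1}$, and since $|h|\le1$ forces $|h|\le|h|^{\alpha+\beta}$, we obtain $|u(x+h)-u(x)|\le C_{\alpha,\beta}K|h|^{\alpha+\beta}$; combined with $\norm{u}_{L^\infty}\le K$ this is exactly part~(1). If $\alpha+\beta>1$ the series is at most $C_{\alpha,\beta}\,2^{N_0(\alpha+\beta-1)}\le C_{\alpha,\beta}\,|h|^{-(\alpha+\beta-1)}$, so the whole right‑hand side is $\le C_{\alpha,\beta}K|h|$, giving the Lipschitz bound of part~(2). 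The delicate point is the borderline $\alpha+\beta=1$: then the series degenerates into $N_0\sim\log(1/|h|)$ equal terms, and the argument as stated yields only $|u(x+h)-u(x)|\le CK|h|\bigl(1+\log\frac1{|h|}\bigr)$ (the Zygmund class rather than Lipschitz). Since in Section~\ref{s:rough-initial-data} the exponent $\alpha$ may be chosen to avoid this equality, I would either borrow the finer summation of \cite{cabre1995fully} to recover the Lipschitz bound when $\alpha+\beta=1$, or simply record the result for $\alpha+\beta\ne1$, which is all that is used later.
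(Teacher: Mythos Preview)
The paper does not prove this lemma at all; it is quoted verbatim from \cite{cabre1995fully}, Section~5.3, and no argument is given here. Your telescoping proof is precisely the one appearing in that reference, so in that sense your approach agrees with what the paper invokes.

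Your argument is correct, and you have correctly flagged the only genuine issue: at $\alpha+\beta=1$ the geometric series degenerates and the method yields only the Zygmund bound $|u(x+h)-u(x)|\le CK|h|\log(1/|h|)$, not Lipschitz. In fact the statement as recorded in the paper is slightly imprecise on this point---the original lemma in \cite{cabre1995fully} treats $\alpha+\beta<1$ and $\alpha+\beta>1$ separately and does not claim Lipschitz at the borderline. Your observation that the application in Section~\ref{s:rough-initial-data} never hits $\alpha+\beta=1$ (one iterates $\beta=\alpha,2\alpha,3\alpha,\dots$ with a fixed small $\alpha$, and can always perturb $\alpha$ slightly to avoid the equality) is the right way to resolve this, and is exactly how the lemma is used.
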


We use the Lemma above in order to probe Theorem \ref{t:rough-initial-data}.

\begin{proof}[Proof of Theorem \ref{t:rough-initial-data}]
All we must prove is that the function $u(-,t)$ becomes Lipschitz for any $t>0$. Then we can apply Theorem \ref{t:hj} to obtain the $C^{1,\alpha}$ estimate. 

Let $c_0 = \I 0$ (the operator $\I$ applied to the constant zero function, which returns a constant). We have that $v(x,t) = c_0 t$ is a particular solution to \eqref{e:hj-general}. We apply Theorem \ref{t:adv-diffusion} to $u-v$ to obtain that $u-v$ is $C^\alpha$ for $t>0$. Therefore $u$ becomes $C^\alpha$ for $t>0$.

Now, we apply iteratively Theorem \ref{t:adv-diffusion} to incremental quotients of $u$ of the form
\[ v_{\beta,h}(x) = \frac {u(x+h)-u(x)}{|h|^\beta}. \]
We start with $\beta=\alpha$. Since $u \in C^\alpha$, then $v_{\beta,h}$ is bounded in $L^\infty$ for any $t>0$ independently of $h$. From Theorem \ref{t:adv-diffusion}, we have that $v_{\beta,h}$ becomes uniformly $C^\alpha$ for $t>0$. But then from Lemma \ref{l:telescopic-lemma}, $u \in C^{2\alpha}$. We repeat this procedure to obtain $u \in C^\beta$ for $\beta = 2\alpha, 3 \alpha, 4 \alpha, \dots$, until we reach the $k$th step when $k\beta > 1$ and we obtain $u \in Lip$. Thus we can apply Theorem \ref{t:hj} and finish the proof.
\end{proof}

\begin{remark}
Under some special assumptions on $H$, it may be possible to prove that solutions of \eqref{e:hj-fract} whose initial data is only uniformly continuous become Lipschitz for $t>0$. This is well known for the classical Hamilton-Jacobi equation without fractional diffusion (see \cite{MR814955}).
\end{remark}

\section{The supercritical case. Non differentiability.}
\label{s:supercritical-case}

We can make an example of a non differentiable solution of \eqref{e:hj-general} with smooth initial data for any $s < 1/2$ in one dimension for $H(p) = |p|^2$ as a corollary of a result of Kiselev and Nazarov \cite{kiselev-blow}.

\begin{thm}
For any $s<1/2$ there is one smooth function $u_0 : \R \to \R$ such that the equation
\[ u_t + \frac{|u_x|^2 } 2 + (-\lap)^s u = 0 \]
does not have a global in time $C^1$ solution with $u(-,0)=u_0$.
\end{thm}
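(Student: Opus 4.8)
The plan is to transfer the blow-up construction of Kiselev and Nazarov \cite{kiselev-blow} for the fractional Burgers equation $v_t + v v_x + (-\lap)^s v = 0$ to the Hamilton-Jacobi equation by integrating in the spatial variable. The key observation is the one already used in the introduction: if $u$ solves $u_t + |u_x|^2/2 + (-\lap)^s u = 0$ in one dimension, then $v = u_x$ formally solves the fractional Burgers equation, since differentiating gives $v_t + v v_x + (-\lap)^s v = 0$. Conversely, given a solution $v$ of fractional Burgers, one recovers $u$ by setting $u(x,t) = u(x_*,t) + \int_{x_*}^x v(y,t)\,dy$ for a suitable base point, where the evolution of $u(x_*,t)$ is pinned by the equation itself. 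Thus a $C^1$ solution $u$ on a time interval corresponds exactly to a $C^0$ solution $v$ of fractional Burgers on the same interval, and the gradient blow-up for $v$ becomes a failure of $C^1$ regularity for $u$.

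First I would recall precisely the statement from \cite{kiselev-blow}: for every $s < 1/2$ there is a smooth, say periodic or Schwartz, initial datum $v_0$ such that the unique solution of the fractional Burgers equation with that datum develops a shock in finite time $T^*$, i.e. $\|v(-,t)\|_{L^\infty}$ stays bounded but $\|v_x(-,t)\|_{L^\infty} \to \infty$ as $t \to T^*$ (or the solution simply ceases to be $C^1$). Second, I would set $u_0(x) = \int_0^x v_0(y)\,dy$, which is smooth since $v_0$ is; if one wants $u_0$ bounded one can take $v_0$ mean-zero and periodic, or localize, being careful that the localization does not interfere with the blow-up on the relevant time scale — Kiselev–Nazarov's construction is essentially local in space, so this is harmless. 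Third, I would argue that if a global-in-time $C^1$ solution $u$ of the Hamilton-Jacobi equation with this initial datum existed, then $v := u_x$ would be a $C^0$ (indeed the spatial regularity of $u$ gives $v$ continuous in $x$, and the equation then forces continuity in $t$) solution of the fractional Burgers equation with datum $v_0$, extending past $T^*$; by uniqueness of the Burgers solution (or by a direct comparison/entropy argument) this contradicts the blow-up, giving the theorem.

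The main obstacle is the rigorous bookkeeping of which notion of solution is being used on each side and verifying that the correspondence $u \leftrightarrow u_x$ respects it. Kiselev–Nazarov work with a specific (e.g. entropy or viscosity) solution concept for Burgers, while our Hamilton-Jacobi solution is a viscosity solution; I would need to check that differentiating a $C^1$ viscosity solution of the HJ equation yields a solution of Burgers in whatever sense is needed to invoke the blow-up, and that the initial data match up. As long as $u$ is genuinely $C^1$ in $x$ up to time $T^*$, the equation $u_t = -|u_x|^2/2 - (-\lap)^s u$ holds classically, so differentiating in $x$ is legitimate and $v = u_x$ is a classical solution of Burgers wherever it is smooth enough; the only subtlety is at the blow-up time itself, where one uses that $v$ would have to be globally $C^0$, contradicting the shock formation. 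A secondary technical point is ensuring the nonlocal term $(-\lap)^s$ commutes with $\partial_x$ on the relevant function class and that tails at infinity are controlled, which is routine once $v_0$ is chosen with enough decay (or periodicity). I would keep the argument short, citing \cite{kiselev-blow} for the analytic heart and presenting only the reduction.
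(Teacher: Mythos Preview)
Your proposal is correct and follows exactly the paper's approach: differentiate to pass from the Hamilton--Jacobi equation to the fractional Burgers equation $v_t + v\,v_x + (-\lap)^s v = 0$ for $v = u_x$, then invoke the shock formation result of \cite{kiselev-blow}. The paper's proof is three lines long and omits all the bookkeeping you discuss (choice of $u_0$ as an antiderivative of $v_0$, solution concepts, tails), treating the reduction as immediate.
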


\begin{proof}
The derivative $v=u_x$ satisfies the fractional Burgers equation
\[ v_t + v \, v_x + (-\lap)^s v = 0. \] 
In \cite{kiselev-blow}, it is shown that this equation can develop shocks for any $s<1/2$.
\end{proof}

\section*{Appendix}

\subsection*{Proof of Lemma \ref{l:S-class-for-difference}}
In this appendix we prove Lemma \ref{l:S-class-for-difference}. We use Jensen's sup-convolutions idea \cite{jensen1988maximum}, which is a standard method in the theory of viscosity solutions.

The definition of viscosity solution satisfies the following important stability condition with respect to half relaxed limits (See \cite{caffarelli2009regularity} or \cite{crandall1992user}).

\begin{prop} \label{p:gamma-limits}
Let $v_k$ be a sequence of viscosity supersolutions of \eqref{e:hj-general}. Let $v_*$ be the lower half relaxed limit (equivalently the $\Gamma$-limit):
\[ v_*(x,t) = \liminf_{\substack{ k \to +\infty \\ y \to x \\ s \to t}} v_k(y,s). \]
Then $v_*$ is also a supersolution of \eqref{e:hj-general}.

If $u_k$ be a sequence of viscosity subsolutions of \eqref{e:hj-general}. Let $u^*$ be the upper half relaxed limit (equivalently $-u^*$ is the $\Gamma$-limit of $-u_k$):
\[ u^*(x,t) = \limsup_{\substack{ k \to +\infty \\ y \to x \\ s \to t}} u_k(y,s). \]
Then $u^*$ is also a subsolution of \eqref{e:hj-general}.
\end{prop}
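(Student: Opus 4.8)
The plan is to prove the statement about supersolutions; the one about subsolutions follows at once by applying it to $-u_k$, which is a sequence of supersolutions of another equation of the form \eqref{e:hj-general} (replace $c^{ij}$ by $-c^{ij}$, keep the vectors $b^{ij}$ and the kernels $a^{ij}$, and interchange the $\sup_i$ and the $\inf_j$ — the structure conditions are preserved), and then negating back. So suppose a $C^2$ function $\varphi$ touches $v_*$ from below at a point $(x_0,t_0)$ in the sense of the definition. Subtracting a small multiple $\eta(|x-x_0|^2+(t-t_0)^2)$ of a cutoff quadratic (and letting $\eta\to0$ at the very end) we may assume the contact is strict on a closed backward cylinder $\bar B_\rho(x_0)\times[t_0-\rho,t_0]$.

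First I would invoke the classical half-relaxed-limit perturbation lemma (see \cite{crandall1992user}): since $v_*$ is lower semicontinuous and equal to the pointwise $\liminf$ of the $v_k$, there are points $(x_k,t_k)\to(x_0,t_0)$ at which $v_k-\varphi$ attains its minimum over $\bar B_\rho(x_0)\times[t_0-\rho,t_0]$, and moreover $v_k(x_k,t_k)\to v_*(x_0,t_0)$. For $k$ large these are interior minima, so $\varphi$, shifted by the vanishing constant $v_k(x_k,t_k)-\varphi(x_k,t_k)$, touches $v_k$ from below at $(x_k,t_k)$, and the supersolution property of $v_k$ gives, for the function $\tilde v_k$ obtained from $v_k$ by replacing it with this shifted $\varphi$ on a fixed small ball around $x_k$,
\[ \partial_t\tilde v_k(x_k,t_k)-\I\tilde v_k(x_k,t_k)\ge 0. \]

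The heart of the proof is to pass to the limit in this inequality, and this is the step I expect to be the main obstacle. The time and transport terms are immediate: $\partial_t\tilde v_k(x_k,t_k)\to v_t(x_0,t_0)$, and $|b^{ij}\cdot\grad\tilde v_k(x_k,t_k)-b^{ij}\cdot\grad v(x_0,t_0)|\le A|\grad\varphi(x_k,t_k)-\grad\varphi(x_0,t_0)|\to0$, both uniformly in the control indices $i,j$. For the nonlocal term $L_{a^{ij}}\tilde v_k(x_k,t_k)$ I would split the integral at the radius $\rho$ of the ball where $\tilde v_k$ equals the shifted $\varphi$. On $\{|y|<\rho\}$ one is in the smooth region, so after symmetrizing and using $|\delta\varphi(x,y)|\le C|y|^2$ together with $a^{ij}\le\Lambda$, dominated convergence gives exact convergence to the corresponding piece of $L_{a^{ij}}v(x_0,t_0)$, uniformly in $i,j$ (using uniform continuity of $\delta\varphi(x,y)/|y|^2$ on a compact set). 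On $\{|y|\ge\rho\}$ one has $\tilde v_k(x_k+y)=v_k(x_k+y)$ and $\tilde v_k(x_k)=v_k(x_k,t_k)\to v_*(x_0,t_0)$; since, by definition of the lower relaxed limit, $\liminf_k v_k(x_k+y,t_k)\ge v_*(x_0+y,t_0)\ge v(x_0+y,t_0)$ for every $y$ (with equality whenever $x_0+y$ lies outside the replacement ball), and since the weight $a^{ij}/|y|^{n+1}\le \Lambda/|y|^{n+1}$ is nonnegative and integrable on $\{|y|\ge\rho\}$ while the $v_k$ are uniformly bounded below (which holds in our applications), Fatou's lemma gives $\liminf_k$ of this piece $\ge$ the corresponding piece of $L_{a^{ij}}v(x_0,t_0)$, again uniformly in $i,j$. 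Adding the two pieces, $\liminf_k\bigl(c^{ij}+b^{ij}\cdot\grad\tilde v_k+L_{a^{ij}}\tilde v_k\bigr)(x_k,t_k)\ge\bigl(c^{ij}+b^{ij}\cdot\grad v+L_{a^{ij}}v\bigr)(x_0,t_0)$ uniformly in $i,j$.

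Finally, because this last inequality is uniform in \emph{both} indices, it survives the extremal operations: taking $\inf_j$ and then $\sup_i$ yields $\liminf_k\I\tilde v_k(x_k,t_k)\ge\I v(x_0,t_0)$. Combining with $\partial_t\tilde v_k(x_k,t_k)\ge\I\tilde v_k(x_k,t_k)$ and $\partial_t\tilde v_k(x_k,t_k)\to v_t(x_0,t_0)$ gives $v_t(x_0,t_0)-\I v(x_0,t_0)\ge0$, so $v_*$ is a supersolution. The delicate points — the reasons the statement is not completely routine — are that only the one-sided (liminf) convergence of the $v_k$ is available, so Fatou must be applied in exactly the direction dictated by the supersolution inequality, and that all the error estimates in the nonlocal term must be made uniform over the controls $(i,j)$ so that the $\sup_i\inf_j$ commutes with the limit in the required sense.
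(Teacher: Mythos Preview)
The paper does not actually prove this proposition: it is stated with a reference to \cite{caffarelli2009regularity} and \cite{crandall1992user} and then used as a black box. Your sketch is essentially the standard argument from those references, adapted to the nonlocal setting, and it is correct in outline.

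Two points are worth sharpening. First, your parenthetical ``which holds in our applications'' about uniform boundedness is a genuine hypothesis that the proposition, as stated, does not carry; without some uniform control on the tails of the $v_k$ the far-field integral need not be finite, let alone stable. In the paper this is harmless because every use is for bounded sequences, but you should flag it as an added assumption. Second, the uniformity in $(i,j)$ of the Fatou step deserves one more line: writing $g_k(y)=v_k(x_k+y,t_k)-v_*(x_0+y,t_0)$, you have $\liminf_k g_k(y)\ge 0$ pointwise, hence $g_k^-(y)\to 0$ pointwise, and by dominated convergence $\int_{|y|\ge\rho} g_k^-\,|y|^{-n-1}\,dy\to 0$. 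Since $a^{ij}\le\Lambda$, this gives
\[
\int_{|y|\ge\rho}\frac{g_k(y)}{|y|^{n+1}}\,a^{ij}(y)\,dy \;\ge\; -\Lambda\int_{|y|\ge\rho}\frac{g_k^-(y)}{|y|^{n+1}}\,dy \;=:\; -\eps_k,
\]
with $\eps_k\to 0$ \emph{independent of $(i,j)$}. That is the inequality that survives $\inf_j$ and $\sup_i$; Fatou alone, applied separately to each $a^{ij}$, would not give a uniform subsequence.
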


We start by recalling the definition of inf and sup convolutions.
\begin{definition}
Given an upper semicontinuous function $u$, the sup-convolution $u^\eps$ is defined as
\[ u^\eps(x,t) = \sup_{y \in \R^n, s \in [0,+\infty)} u(y,s) - \frac 1 \eps (|x-y|^2 + |t-s|^2). \]
Conversely, for a lower semicontinuous function $v$, the inf-convolution $v_\eps$ is defined as
\[ v_\eps(x,t) = \inf_{y \in \R^n, s \in [0,+\infty)} v(y,s) + \frac 1 \eps (|x-y|^2 + |t-s|^2). \]
\end{definition}

The importance of this construction is that it regularizes the function $u$ making it semiconvex in the case of $u^\eps$ and semi concave in the case $u_\eps$, and therefore Lipschitz, while preserving the condition of sub or supersolution as it is pointed out by the following standard lemma.

\begin{lemma} \label{l:supconv}
If $u(x,t)$ is a viscosity subsolution of \eqref{e:hj-general}, then so is $u^\eps$.

If $v(x,t)$ is a viscosity supersolution of \eqref{e:hj-general}, then so is $v_\eps$.
\end{lemma}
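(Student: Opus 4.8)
The plan is to verify the viscosity subsolution property of $u^\eps$ directly from that of $u$, using that, up to an additive constant, $u^\eps$ is a supremum of translates of $u$ and that the operator $\I$ in \eqref{e:hj-general} is translation invariant. Since $u$ is bounded and upper semicontinuous, for each $(x,t)$ the supremum defining $u^\eps(x,t)$ is attained at some point $(y,s)$, and comparing with the choice $(y,s)=(x,t)$ shows $\eps^{-1}(|x-y|^2+|t-s|^2)\le \osc u$; in particular, for $t$ bounded away from $0$ and $\eps$ small the maximizer has $s>0$, so one works in the interior of the time interval (the remaining case of $t$ near $0$ is a routine technicality handled by proving the statement on $\R^n\times(\delta,\infty)$ for each $\delta>0$, shrinking the test neighborhoods).

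Suppose a $C^2$ function $\varphi$ touches $u^\eps$ from above at $(x_0,t_0)$, so $\varphi(x_0,t_0)=u^\eps(x_0,t_0)$ and $\varphi\ge u^\eps$ on $B_r(x_0)\times(t_0-r,t_0]$. Let $(y_0,s_0)$ realize the supremum in $u^\eps(x_0,t_0)$, set $a=x_0-y_0$, $\tau=t_0-s_0$, $K=\eps^{-1}(|a|^2+|\tau|^2)$, and define $\tilde\varphi(y,s):=\varphi(y+a,s+\tau)+K$. First I would check that $\tilde\varphi$ is a $C^2$ function touching $u$ from above at $(y_0,s_0)$: for $(y,s)\in B_r(y_0)\times(s_0-r,s_0]$ one has $u(y,s)\le u^\eps(y+a,s+\tau)+K\le\varphi(y+a,s+\tau)+K=\tilde\varphi(y,s)$, with equality at $(y_0,s_0)$ by the choice of maximizer.

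Next I would apply the definition of viscosity subsolution to $u$ with test function $\tilde\varphi$: picking a small ball $B_{r'}(y_0)$ and forming $V=\tilde\varphi$ on $B_{r'}(y_0)\times(s_0-r',s_0+r')$ and $V=u$ elsewhere, we get $V_s(y_0,s_0)-\I V(y_0,s_0)\le 0$. Form the corresponding competitor $v=\varphi$ on $B_{r'}(x_0)\times(t_0-r',t_0+r')$, $v=u^\eps$ elsewhere, which enters the definition for $u^\eps$. The local terms match: $V_s(y_0,s_0)=\varphi_t(x_0,t_0)=v_t(x_0,t_0)$ and $\grad V(y_0,s_0)=\grad v(x_0,t_0)$. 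For the nonlocal term the key inequality is
\[ v(x_0+z,t_0)-v(x_0,t_0)\ \ge\ V(y_0+z,s_0)-V(y_0,s_0)\qquad\text{for all }z\in\R^n, \]
with equality for $|z|<r'$: for small $z$ both sides equal $\varphi(x_0+z,t_0)-\varphi(x_0,t_0)$, while for large $z$ one uses $v(x_0+z,t_0)=u^\eps(x_0+z,t_0)\ge u(y_0+z,s_0)-K=V(y_0+z,s_0)-K$ together with $v(x_0,t_0)=u^\eps(x_0,t_0)=u(y_0,s_0)-K=V(y_0,s_0)-K$. Since every kernel satisfies $a^{ij}\ge\lambda>0$, this inequality passes through each integral $\int(\cdot)\,|z|^{-n-1}a^{ij}(z)\,dz$; as it also holds for the gradient term and the $c^{ij}$ are constant, monotonicity of $\sup_i\inf_j$ gives $\I v(x_0,t_0)\ge\I V(y_0,s_0)$. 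Hence $v_t(x_0,t_0)-\I v(x_0,t_0)\le V_s(y_0,s_0)-\I V(y_0,s_0)\le 0$, which is precisely the subsolution inequality for $u^\eps$. The inf-convolution statement follows by applying the above to $-v$, which is a subsolution of the equation obtained from \eqref{e:hj-general} by interchanging $\sup_i$ and $\inf_j$ and negating the $c^{ij}$, an equation of the same type.

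The step I expect to require the most care is this transfer of the nonlocal term, since $V$ and $v$ are the ``filled-in'' test functions rather than $u$ and $u^\eps$ themselves: one must separately verify the displayed global inequality in the small-$|z|$ and large-$|z|$ regimes and confirm it is preserved under the extremal operations. The time-boundary point ($s_0=0$, or $t_0$ small) is a genuine but standard technicality, dealt with as indicated above.
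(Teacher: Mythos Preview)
Your argument is correct and follows the same idea the paper invokes in one line: $u^\eps$ is a supremum of space--time translates of $u$ (minus constants), and since \eqref{e:hj-general} is translation invariant and unchanged by additive constants, the subsolution property passes to the supremum. You have simply unpacked this standard fact by hand---tracking the maximizer, shifting the test function, and verifying the nonlocal term pointwise---which is exactly what the paper's sketch leaves implicit.
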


\begin{proof}
The proof follows by observing that $u^\eps$ is a supremum of translations of $u$ and the equation \eqref{e:hj-general} is translation invariant. The same idea applies to $v_\eps$.
\end{proof}

In the next proposition we point out, without a proof, the standard properties of inf and sup convolution (See for example \cite{crandall1992user} for the proofs and further discussion).

\begin{prop} \label{p:supconv}
\begin{itemize}
\item The function $u^\eps$ is semiconvex in the sense that every point has a tangent paraboloid from below with opening $1/\eps$.
\item The function $v_\eps$ is semi concave in the sense that every point has a tangent paraboloid from above with opening $1/\eps$.
\item The functions $u^\eps$ and $v_\eps$ are Lipschitz, with Lipschitz constant bounded by $\frac 1 \eps \norm{u}_{L^\infty}^{1/2}$.
\item $u^\eps$ converges to $u$ and $v_\eps$ converges to $v$ in the half relaxed sense (i.e. $v_\eps$ $\Gamma$-converges to $v$ and $-u^\eps$ $\Gamma$-converges to $-u$) as $\eps \to 0$.
\end{itemize}
\end{prop}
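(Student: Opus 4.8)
The statement to prove is Proposition \ref{p:supconv}, the list of standard properties of inf- and sup-convolutions. Since the paper explicitly says these are stated ``without a proof'' and refers to \cite{crandall1992user}, my plan would be to record the short classical arguments, treating only the sup-convolution $u^\eps$; the statements for $v_\eps$ follow by applying the results to $-v$. Throughout write $\Phi_\eps(x,t;y,s) = u(y,s) - \frac1\eps(|x-y|^2 + |t-s|^2)$, so that $u^\eps(x,t) = \sup_{(y,s)} \Phi_\eps(x,t;y,s)$; since $u$ is bounded and upper semicontinuous, the supremum is attained at some point $(y^*,s^*)$, and from $\Phi_\eps(x,t;y^*,s^*) \geq \Phi_\eps(x,t;x,t) = u(x,t)$ one gets the basic bound $\frac1\eps(|x-y^*|^2+|t-s^*|^2) \leq u(y^*,s^*) - u(x,t) \leq 2\norm{u}_{L^\infty}$, which controls how far the maximizing point can be from $(x,t)$.

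\textbf{Step 1 (semiconvexity).} For fixed $(y,s)$, the function $(x,t) \mapsto \Phi_\eps(x,t;y,s)$ equals $-\frac1\eps(|x|^2+|t|^2)$ plus an affine function of $(x,t)$; hence $u^\eps(x,t) + \frac1\eps(|x|^2+|t|^2)$ is a supremum of affine functions, therefore convex. This is exactly the assertion that every point of the graph of $u^\eps$ has a supporting paraboloid from below with opening $1/\eps$.

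\textbf{Step 2 (Lipschitz bound).} From Step 1, $u^\eps$ is locally Lipschitz; to get the quantitative constant, fix $(x,t)$ with maximizing point $(y^*,s^*)$ and let $(x',t')$ be another point. Using $(y^*,s^*)$ as a (not necessarily optimal) competitor for $u^\eps(x',t')$ gives $u^\eps(x',t') \geq u(y^*,s^*) - \frac1\eps(|x'-y^*|^2+|t'-s^*|^2)$, so $u^\eps(x,t) - u^\eps(x',t') \leq \frac1\eps\big(|x'-y^*|^2+|t'-s^*|^2 - |x-y^*|^2 - |t-s^*|^2\big)$. Expanding and using $|x-y^*| + |t-s^*| \leq C\norm{u}_{L^\infty}^{1/2}\eps^{1/2}$ from the basic bound, the right side is $\leq \frac C\eps \norm{u}_{L^\infty}^{1/2}(|x-x'|+|t-t'|) + O(\eps^{-1}(|x-x'|^2+|t-t'|^2))$; together with the symmetric estimate and the convexity from Step 1 (which rules out the quadratic term dominating), one extracts $\norm{u^\eps}_{Lip} \leq \frac C\eps \norm{u}_{L^\infty}^{1/2}$.

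\textbf{Step 3 ($\Gamma$-convergence).} One inequality is immediate: $u^\eps \geq u$ pointwise (take $(y,s)=(x,t)$ in the sup), so $\liminf$ of $u^\eps$ along any sequence is $\geq u$ by upper semicontinuity, wait --- rather, for the $\Gamma$-limit we need the $\limsup$ side. If $(x_\eps,t_\eps) \to (x,t)$, pick maximizers $(y_\eps,s_\eps)$; by the basic bound $(y_\eps,s_\eps) \to (x,t)$ as well, so $\limsup_\eps u^\eps(x_\eps,t_\eps) = \limsup_\eps \big(u(y_\eps,s_\eps) - \tfrac1\eps(\cdots)\big) \leq \limsup_\eps u(y_\eps,s_\eps) \leq u(x,t)$ by upper semicontinuity of $u$. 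Combined with $u^\eps \geq u$ this gives exactly that $-u^\eps$ $\Gamma$-converges to $-u$. The statement that Proposition \ref{p:gamma-limits} then applies is what is actually used downstream, so this is the relevant conclusion.

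\textbf{Main obstacle.} None of the individual steps is hard; these are textbook facts. The only delicate point is the bookkeeping in Step 2 to turn the local Lipschitz estimate into the clean constant $\frac1\eps\norm{u}_{L^\infty}^{1/2}$ — one must be careful that the ``maximizer moves a bounded distance'' estimate is uniform and feeds correctly into the difference quotient — but since the paper is content to cite \cite{crandall1992user} and omit the proof, a reasonable resolution is to state the properties with a one-line indication of the argument and a reference, rather than carry out the full computation.
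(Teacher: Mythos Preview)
The paper gives no proof of this proposition at all: it explicitly states the properties ``without a proof'' and refers the reader to \cite{crandall1992user}. So there is nothing in the paper to compare your argument against; your sketch is supplementary material the author chose to omit.

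Your outline is correct and follows the standard route. Two small remarks. First, Step~2 can be done more cleanly: since $u^\eps + \tfrac1\eps(|x|^2+|t|^2)$ is convex (Step~1), $u^\eps$ is differentiable a.e., and at any point of differentiability the first-order condition at the maximizer $(y^*,s^*)$ gives $\nabla u^\eps(x,t) = \tfrac{2}{\eps}(y^*-x,\,s^*-t)$; combined with your basic bound $|x-y^*|^2+|t-s^*|^2 \leq 2\eps\norm{u}_{L^\infty}$ this yields directly $|\nabla u^\eps| \leq C\eps^{-1/2}\norm{u}_{L^\infty}^{1/2}$, which is in fact sharper than the $\eps^{-1}$ exponent stated in the proposition (the paper's stated bound is still an upper bound for $\eps\leq 1$, just not the optimal one). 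This avoids the awkward ``rules out the quadratic term dominating'' step in your argument. Second, your Step~3 is fine once you sort out the direction: for the upper half-relaxed limit you need exactly the $\limsup$ inequality you prove, and the pointwise bound $u^\eps \geq u$ supplies the recovery sequence for the other direction.
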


Now we are ready to write the proof of Lemma \ref{l:S-class-for-difference} in the same fashion as the corresponding theorem in \cite{caffarelli2009regularity}.

\begin{proof}[Proof of Lemma \ref{l:S-class-for-difference}]
By Lemma \ref{l:supconv}, $u^\eps$ is also a subsolution of \eqref{e:hj-general} and $v_\eps$ a supersolution. Moreover, by Proposition \ref{p:supconv}, $u^\eps \to u$ and $v_\eps \to v$ in the half relaxed sense. By the stability of viscosity solutions under half relaxed limits, it is enough to show that $w = (u^\eps - v_\eps)$ is a subsolution of \eqref{e:equation-for-difference}.

Let $\varphi$ be a smooth function touching $(u^\eps - v_\eps)$ from above at the point $(x,t)$ (with $t>0$). Note that for any $\eps>0$, both functions $u^\eps$ and $- v_\eps$ are semiconvex, which means that they have a tangent paraboloid from below of opening $1/\eps$. Since the smooth function $\varphi$ touches $(u^\eps - v_\eps)$ by above at the point $(x,t)$, then both $u^\eps$ and $-v_\eps$ must be $C^{1,1}$ at the point $(x,t)$ (meaning that there are tangent paraboloids from both sides). But this means that $\partial_t u^*$, $\partial_t v_*$, $\grad u^*$ and $\grad v_*$ are well defined classically at the point $(x,t)$. Moreover it also means that we can evaluate $\I u^\eps(x,t)$ and $\I v_\eps(x,t)$ in the classical sense. But then we compute directly at the point $(x,t)$,
\begin{align*}
(u^\eps-v_\eps)_t - A |\grad (u^\eps-v_\eps)| - M^+ (u^\eps-v_\eps) \leq \partial_t u^* - \I u^* - \partial_t v_* + \I v_* \leq 0 
\end{align*}
which clearly implies that also 
\[
\varphi_t(x,t) - A |\grad \varphi(x,t)| - M^+ \varphi(x,t) \leq 0
\]
since at the point $(x,t)$, $\varphi_t = (u^\eps-v_\eps)_t$, $\grad \varphi = \grad (u^\eps-v_\eps)$ and $M^+ \varphi \geq M^+ (u^\eps-v_\eps)$, all in the classical sense.

This proves that $u-v$ satisfies \eqref{e:equation-for-difference}. The other inequality follows in a similar way.
\end{proof}

\subsection*{Comparison principle for \eqref{e:hj-fract}}
Given a locally Lipschitz kernel $H$, we can apply Lemma \ref{l:S-class-for-difference} to a solution of \eqref{e:hj-fract} only a posteriori if we know that the viscosity solution $u$ is Lipschitz, so that we can restrict $H$ to a bounded domain. The Lipschitz continuity of the solution $u$ of \eqref{e:hj-fract} is a simple consequence of comparison principle if the initial data is already Lipschitz (Lemma \ref{l:lipschitz}). However, we need to prove the comparison principle for equation \eqref{e:hj-fract} without using Lemma \ref{l:S-class-for-difference}. This can be done by applying the idea of the proof of Lemma \ref{l:maximum-principle} directly to the difference of sup and inf convolutions. However one must assume that the inequality $u(-,0) \leq v(-,0)$ holds in a somewhat uniform way. More precisely, for every $\eps>0$, there is a $\delta>0$ such that
\begin{equation} \label{e:uniform-inequality}
 u(x,t) \leq v(y,s) + \eps \qquad \text{if } |x-y|<\delta \text{ and } |t|,|s| < \delta.
\end{equation}
This is the case if for example $u(-,0) \leq v(-,0)$ and both $u$ and $v$ are uniformly continuous on $\R^n \times \{0\}$. Note that this condition implies that there is a modulus of continuity $\omega$ such that
\[ u^\eps(x,0) \leq v_\eps(x,0) + \omega(\eps) \qquad \text{for all } x \in \R^n. \]
Where $u^\eps$ and $v_\eps$ are the sup and inf convolutions respectively.

\begin{prop} \label{p:comparison-principle}
Let $u$ be a subsolution of \eqref{e:hj-fract} and $v$ be a supersolution of \eqref{e:hj-fract}. Assume that for every $\eps>0$, there is a $\delta>0$ such that \eqref{e:uniform-inequality} holds. Then $u \leq v$ in $\R^n \times [0,+\infty)$.
\end{prop}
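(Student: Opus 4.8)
The plan is to mimic the proof of Lemma \ref{l:maximum-principle}, but applied to the difference of a sup-convolution of $u$ and an inf-convolution of $v$, since these regularized functions are Lipschitz (hence the relevant quantities can be evaluated classically at points of $C^{1,1}$ contact) while still being sub/supersolutions of \eqref{e:hj-fract} by Lemma \ref{l:supconv}. The first step is to fix $\eps>0$, pass to $u^\eps$ and $v_\eps$, and record the key consequence of hypothesis \eqref{e:uniform-inequality}: there is a modulus $\omega$ with $\omega(\eps)\to 0$ such that $u^\eps(x,0)\le v_\eps(x,0)+\omega(\eps)$ for all $x$. It suffices to prove $u^\eps - v_\eps \le \omega(\eps)$ on $\R^n\times[0,\infty)$ and then let $\eps\to 0$, using that $u^\eps\to u$ and $v_\eps\to v$ pointwise (indeed monotonically).

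Next I would argue by contradiction. Suppose $w := u^\eps - v_\eps$ exceeds $\omega(\eps)$ somewhere, say $w(x_0,t_0) > \omega(\eps)$ with $t_0>0$. As in Lemma \ref{l:maximum-principle}, pick $M > \sup w$, set $\eps_1 \ll (w(x_0,t_0)-\omega(\eps))/(1+A)$, and use a rescaled smooth bump $f$ with $|M^+ f|, |\grad f| < \eps_1$ everywhere. The competitor supersolution is $\varphi(x,t) = M - hf(x-x_0) + (1+A)h\eps_1 t$, which satisfies $\varphi_t - A|\grad\varphi| - M^+\varphi > 0$ strictly. Exactly as before: at $h=0$ we have $\varphi > w$; for every $h>0$ we have $\varphi(x,0) > u^\eps(x,0) - v_\eps(x,0)$ at $x=x_0$ because $hf>0$ there and $w(x_0,0)\le\omega(\eps)<w(x_0,t_0)=M-$ (adjust so the strict inequality holds after the time slope kicks in); choosing $h$ appropriately makes $\varphi$ touch $w$ from above at some interior point $(x_1,t_1)$ with $t_1>0$. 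The crucial point is that at a point of contact of the smooth $\varphi$ with $w = u^\eps - v_\eps$, semiconvexity of both $u^\eps$ and $-v_\eps$ forces each to be $C^{1,1}$ at $(x_1,t_1)$ (tangent paraboloids from both sides), so $\partial_t u^\eps$, $\partial_t v_\eps$, $\grad u^\eps$, $\grad v_\eps$, and the integro-differential operator $(-\lap)^{1/2}$ applied to each exist classically there; one then runs the computation
\[
\varphi_t - A|\grad\varphi| - M^+\varphi \ \le\ (u^\eps - v_\eps)_t - A|\grad(u^\eps - v_\eps)| - M^+(u^\eps - v_\eps) \ \le\ 0
\]
at $(x_1,t_1)$, using $M^+\varphi \ge M^+(u^\eps-v_\eps)$ and the fact that $u^\eps$ is a subsolution and $v_\eps$ a supersolution of \eqref{e:hj-fract} with the same fixed constant $a$ (here we do \emph{not} invoke Lemma \ref{l:S-class-for-difference}, but rather redo its one-line computation by hand at this single point of classical differentiability). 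This contradicts the strict inequality satisfied by $\varphi$.

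I expect the main obstacle to be the bookkeeping around the initial time and the modulus $\omega(\eps)$: one must arrange the competitor $\varphi$ so that it starts strictly above $w$ on $\{t=0\}$ (which requires the $hf$ term to dominate $\omega(\eps)$, hence a lower bound on $h$ relating $\eps_1$, $\omega(\eps)$ and the bump), while still being able to slide $h$ down to produce an interior touching point — this is a slightly more delicate version of the $h$-continuation argument in Lemma \ref{l:maximum-principle} because the target gap is $\omega(\eps)$ rather than $0$. The other technical point worth care is justifying that $u^\eps$ and $v_\eps$ are genuinely sub/supersolutions of \eqref{e:hj-fract} on a half-space in $t$ including arbitrarily large times (so the sup/inf in their definitions is taken over $s\in[0,\infty)$, and near $t=0$ one uses \eqref{e:uniform-inequality} to control boundary effects), but this is exactly Lemma \ref{l:supconv} together with Proposition \ref{p:supconv}. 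Once $u^\eps - v_\eps \le \omega(\eps)$ is established for every $\eps$, letting $\eps\to 0$ gives $u\le v$ and completes the proof.
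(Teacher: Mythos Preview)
Your overall architecture matches the paper's: pass to $u^\eps$, $v_\eps$, use \eqref{e:uniform-inequality} to get $u^\eps(\cdot,0)\le v_\eps(\cdot,0)+\omega(\eps)$, and run a bump-function/sliding argument to show $u^\eps-v_\eps\le\omega(\eps)$, then let $\eps\to 0$. The $C^{1,1}$-at-contact observation and the bookkeeping you flag around $t=0$ are both correct and appear in the paper's proof as well.

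However, there is a genuine gap. Throughout your argument you use a constant $A$ (in the barrier $\varphi(x,t)=M-hf(x-x_0)+(1+A)h\eps_1 t$, in the choice of $\eps_1$, and in the key inequality $H(\grad u^\eps)-H(\grad v_\eps)\ge -A|\grad u^\eps-\grad v_\eps|$) without ever saying where it comes from. For \eqref{e:hj-fract} the Hamiltonian $H$ is only \emph{locally} Lipschitz, and the entire point of this proposition --- the reason it cannot simply be deduced from Lemma~\ref{l:S-class-for-difference} plus Lemma~\ref{l:maximum-principle} --- is that no global $A$ is available a priori. The paper closes this gap by invoking the third item of Proposition~\ref{p:supconv}: the sup/inf convolutions are Lipschitz with constant $C/\eps$, so at the contact point $|\grad u^\eps|,|\grad v_\eps|\le C/\eps$, and one may take $A=A(\eps)$ to be the Lipschitz constant of $H$ on the ball of radius $2C/\eps$. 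Only then does the subtraction $H(\grad u^\eps)-H(\grad v_\eps)\ge -A|\grad u^\eps-\grad v_\eps|$ make sense. Since $A$ now depends on $\eps$, the scale of the bump (your $\eps_1$, the paper's $\lambda$) must be chosen after $A$; the paper arranges this cleanly by putting the time penalty $-\delta t/t_0$ independent of $A$ and absorbing the $A$-dependence into the smallness of $\lambda$ at the very end. Your barrier bakes $A$ into the time slope, which still works once you declare $A=A(\eps)$, but as written the argument is incomplete because this step is the heart of the matter.
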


\begin{proof}
Let $u^\eps$ and $v_\eps$ be the sup and inf convolutions, which are a subsolution and a supersolution respectively. Since \eqref{e:uniform-inequality} holds, there is a modulus of continuity $\omega$ such that
\[ u^\eps(x,0) \leq v_\eps(x,0) + \omega(\eps) \qquad \text{for all } x \in \R^n. \]
We will prove that $u^\eps - v_\eps \leq \omega(\eps)$ in $\R^n \times [0,+\infty)$. The Proposition follows by passing to the limit as $\eps \to 0$.

Assume that $\sup u^\eps - v_\eps - \omega(\eps) = \sigma > 0$. We cannot assure that the supremum is realized at any point, however, for any $\delta>0$, there is some point $(x_0,t_0)$ (with $t_0>0$) such that $u^\eps(x_0,t_0) - v_\eps(x_0,t_0) - \omega(\eps) > \sigma - \delta$.

Let $g$ be a smooth bump function, for example $g(x) = 1/(1+|x|^2)$. We proceed like in the proof of Lemma \ref{l:maximum-principle}, we look for the maximum of the function 
\[w = u^\eps - v_\eps + 2\delta g(\lambda(x-x_0)) - \frac {\delta}{t_0} t\] 
for $\lambda \ll 1/t_0$. If $\delta$ is small enough, the maximum must be realized at some point $(x_1,t_1)$ with $t_1>0$ since $w(x_0,t_0)> \sigma + \omega(\eps)$ but $w(x,t) \leq \omega(\eps)$ for $x$ large or $t$ large and $w \leq \omega(\eps) + 2\delta$ on $t=0$.

Like in the proof of Lemma \ref{l:S-class-for-difference}, at the point $(x_1,t_1)$ where the maximum is realized, the functions $u^\eps$ and $v_\eps$ are $C^{1,1}$ (in the sense that they have a tangent paraboloid from above and below). So, the equation can be evaluated in the classical sense at that point and we obtain
\begin{align*}
\partial_t u^\eps &= \partial_t v_\eps + \frac \delta t_0, \\
\grad u^\eps &= \grad v_\eps - \delta \lambda \grad g(\lambda(x_1-x_0)),\\
(-\lap)^{1/2} u^\eps &= (-\lap)^{1/2} v_\eps - \delta \lambda (-\lap)^{1/2} g(\lambda(x_1-x_0)).
\end{align*}

From the Lipschitz estimate on $u^\eps$ and $v_\eps$, we know that $|\grad u^\eps|, |\grad v_\eps| \leq C / \eps$. Let $A$ be the Lipschitz constant of $H$ in the ball of radius $2C / \eps$. Then we have
\begin{align*}
0 &\geq \partial_t u^\eps + H(\grad u^\eps) + (-\lap)^{1/2} u^\eps \\
&\geq \partial_t v_\eps + \delta/t_0 + H(\grad v_\eps) + A \delta \lambda \norm{\grad g}_{L^\infty} + \delta \lambda \norm{(-\lap)^{1/2} g}_{L^\infty} \\
&\geq \frac \delta t_0 - A \delta \lambda \norm{\grad g}_{L^\infty} - \delta \lambda \norm{(-\lap)^{1/2} g}_{L^\infty}
\end{align*}

But this is a contradiction if $\lambda$ was chosen much smaller than $1/t_0$, and we finish the proof.
\end{proof}

\subsection*{Existence of solutions}
We now sketch the proof of existence of a viscosity solution to either \eqref{e:hj-general} or \eqref{e:hj-fract} by Perron's method. We write the proof for \eqref{e:hj-fract}, but the same proof would work for \eqref{e:hj-general}. We choose to write it for \eqref{e:hj-fract} because that case is slightly more difficult since the comparison principle requires the inequality at time $t=0$ to hold uniformly as in \eqref{e:uniform-inequality}.

Let $u_0$ be a uniformly continuous function in $\R^n$. We will prove that there exists a continuous function $u : \R^n \times [0,+\infty)$ that solves \eqref{e:hj-fract}, and $u(-,0)=u_0$.

Perron's method consists in taking the infimum (or relaxed infimum) of the family of all supersolutions of the equation. There are standard methods, using the comparison principle, to prove that this infimum is a continuous viscosity solution. But there is an extra difficulty for every particular equation in constructing the appropriate barrier functions  in order to prove that the infimum of all supersolutions is continuous at $t=0$ and the initial condition $u(-,0)=u_0$ is satisfied.

Let $\mathcal{U}$ be the set of all supersolutions $u$ such that there is some modulus of continuity $\omega$ so that for every $x,y \in \R^n$ and $t>0$,
\begin{equation} \label{e:unif-boundary-condition}
 u(y,t) \geq u_0(x) - \omega(|y-x|+t).
\end{equation}

We start by constructing appropriate barriers to show that the set $\mathcal{U}$ is non empty and bounded below.

Let $b$ be a smooth bump function such that
$b(0)=1$, $b \leq 1$ and $\supp b = B_1$. Then, depending on the modulus of continuity of $u_0$, for every $\eps>0$ and $x_0 \in \R^n$, there is a $\delta>0$ so that 
\begin{align*}
U_0(x) := b((x-x_0)/\delta) u_0(x_0) + (1-b((x-x_0)/\delta)) \norm{u_0}_{L^\infty} + \eps &\geq u_0(x) \\
L_0(x) := b((x-x_0)/\delta) u_0(x_0) - (1-b((x-x_0)/\delta)) \norm{u_0}_{L^\infty} - \eps &\leq u_0(x)
\end{align*}

Since $U_0$ and $L_0$ are smooth functions, $|\grad U_0|$, $|(-\lap)^{1/2} U_0|$, $|\grad L_0|$ and $|(-\lap)^{1/2} L_0|$ are bounded by some constant $C$. So we can construct a supersolution and a subsolution respectively by
\begin{align*}
U(x) := U_0(x) + Ct \\
L(x) := L_0(x) - Ct
\end{align*}

Note that both $U$ and $L$ are uniformly continuous, so we can apply the comparison principle of Proposition \ref{p:comparison-principle}. By comparison principle, every $u \in \mathcal{U}$ satisfies $u \geq L$ for all lower barriers $L$ (for all $x_0 \in \R^n$ and $\eps >0$). Moreover, $\mathcal{U}$ is not empty since every upper barrier $U$ belongs to $\mathcal{U}$.

Let $u_*$ be the following function:
\[ u_*(x,t) = \liminf_{r \to 0} \inf_{\substack{|x-y|<r \\ |t-s|<r}} \inf_{u \in \mathcal U} u(y,s) \]

For every $x_0 \in \R^n$ and $\eps>0$, $L(x,t) \leq u_*(x,t) \leq U(x,t)$, since $U \in \mathcal{U}$ and for every $u \in \mathcal{U}$, $u \geq L$ by the comparison principle (Proposition \ref{p:comparison-principle}). Therefore, $u_*$ is uniformly continuous on $\R^n \times \{0\}$ and $u_*(-,0) = u_0$.

It can be shown (see \cite{crandall1992user} for the general method) that $u_*$ is a continuous viscosity solution of the equation.

\section{Acknowledgments}

Luis Silvestre was partially supported by an NSF grant and the Sloan fellowship.

\bibliographystyle{plain}   
\bibliography{chj}             
\index{Bibliography@\emph{Bibliography}}%

\end{document}